\documentclass[10pt]{amsart}

\usepackage[leqno]{amsmath}
\usepackage{amsthm}
\usepackage{amsfonts}
\usepackage{amssymb}
\usepackage{eucal}
\usepackage{graphicx}
\DeclareGraphicsExtensions{.pdf,.png,.jpg}
\usepackage{hyperref}
\usepackage[all]{xy}
\CompileMatrices

\newcommand{\tuple}[1]{\ensuremath{\left \langle #1 \right \rangle }}

\DeclareFontFamily{OT1}{rsfs}{}
\DeclareFontShape{OT1}{rsfs}{n}{it}{<-> rsfs10}{}
\DeclareMathAlphabet{\mathscr}{OT1}{rsfs}{n}{it}





\theoremstyle{plain}
  \newtheorem{theorem}{Theorem}

\theoremstyle{definition}

\theoremstyle{remark}

\numberwithin{equation}{section}

   \topmargin=0in
   \oddsidemargin=0in
   \evensidemargin=0in
   \textwidth=6.5in
   \textheight=8.5in
   \DeclareMathOperator{\Elem}{\operatorname{Elem}}
   \DeclareMathOperator{\HP}{\operatorname{HP}}
   \DeclareMathOperator{\RB}{\operatorname{ReplaceBase}}
   \DeclareMathOperator{\rem}{\operatorname{rem}}

\title{Exponential prefixed polynomial equations}
\author{Aran Nayebi}
\date{September 12, 2012}
\email{anayebi@stanford.edu}
\urladdr{http://www.stanford.edu/~anayebi}
\subjclass[2010]{Primary 03B70; Secondary 05C15, 05A18}
\keywords{Diophantine equation, Finite Ramsey Theorem}
\begin{document}
\begin{abstract}
A prefixed polynomial equation is an equation of the form $P(t_1,\ldots,t_n) = 0$, where $P$ is a polynomial whose variables $t_1,\ldots,t_n$ range over the natural numbers, preceded by quantifiers over some, or all, of its variables. Here, we consider exponential prefixed polynomial equations (EPPEs), where variables can also occur as exponents.  We obtain a relatively concise EPPE equivalent to the combinatorial principle of the Paris-Harrington theorem for pairs (which is independent of primitive recursive arithmetic), as well as an EPPE equivalent to Goodstein's theorem (which is independent of Peano arithmetic). Some new devices are used in addition to known methods for the elimination of bounded universal quantifiers for Diophantine predicates.
\end{abstract}
\maketitle
\tableofcontents

\section{Introduction}
A prefixed polynomial equation is an equation of the form $P(t_1,\ldots,t_n) = 0$, where $P$ is a polynomial with variables $t_1,\ldots,t_n$ that range over the natural numbers, preceded by quantifiers over some or all of its variables. Bovykin and De Smet \cite{bovykin} study the collection of all such possible expressions (terming this ``the Atlas''), with the equivalence of relation of being ``EFA-provably equivalent'' on its members. Thus, members of the same class are prefixed polynomial expressions that are provably equivalent to one another. It is not difficult to obtain a prefixed polynomial representation, but the value of obtaining polynomial expressions is that they provide concrete examples of unprovable statements and explicit illustrations of deep logical phenomena. For example, the prefixed polynomial expression that Bovykin and De Smet obtain for 1-Con(ZFC+Mahlo) implies, over $I\Sigma_1$, all two quantifier arithmetical theorems that can be proved in ZFC + Mahlo cardinals. To avoid too much repetition, we refer the reader to the discussion in \cite{bovykin} for a detailed exposition as to the importance of such an Atlas. \newline
\indent One example of prefixed polynomial expressions of well-known logical phenomena that Bovykin and De Smet obtain are representations (involving alternations of universal and existential quantifiers) of the Paris Harrington theorem, and the special cases of the Paris-Harrington theorem for pairs and triples. The Paris-Harrington theorem \cite{paris-harrington}, which states that a simple extension to the finite Ramsey theorem is not provable in first-order Peano arithmetic, is a seemingly natural mathematical example of incompleteness, namely:
\begin{theorem}[PH]
For all numbers $e, r,$ and $k$, there exists a number $M$, such that for every coloring $f$ of $e$-subsets of $[M+1] = \{0, 1, \ldots, M\}$ into $r$ colors, there is an $f$-homogeneous $Y \subseteq [M+1]$ of size at least $\operatorname{min}(Y) + k - 1$.
\end{theorem}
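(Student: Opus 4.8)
The plan is to deduce \textup{PH} from the \emph{infinite} Ramsey theorem by a compactness argument, the point being that the relative largeness requirement $|Y| \ge \min(Y) + k - 1$ is met automatically by a long enough initial segment of any infinite homogeneous set. Fix $e$, $r$, and $k$, and suppose toward a contradiction that no $M$ works; that is, for every $M$ there is a coloring of the $e$-subsets of $[M+1]$ into $r$ colors admitting no homogeneous $Y \subseteq [M+1]$ with $|Y| \ge \min(Y) + k - 1$. Call such a coloring \emph{bad}.

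First I would assemble the bad colorings into a tree $T$: its nodes of level $M$ are the bad colorings of the $e$-subsets of $[M+1]$, and a level-$M$ node $g$ lies below a level-$M'$ node $g'$ (with $M' > M$) exactly when $g'$ restricts to $g$. This tree is finitely branching, since there are only finitely many colorings of a finite collection of subsets into $r$ colors, and it has a node at every level by the contradiction hypothesis. Moreover restriction sends bad colorings to bad colorings: a relatively large homogeneous set for a restriction lies inside the smaller index set and is therefore a relatively large homogeneous set for the original coloring as well. Hence $T$ is an infinite, finitely branching tree.

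König's lemma now yields an infinite branch $g_0, g_1, g_2, \ldots$, and since consecutive $g_M$ agree under restriction they glue to a single coloring $f$ of the $e$-subsets of $\N$ into $r$ colors, each of whose restrictions $f|_{[M+1]}$ is bad. Applying the infinite Ramsey theorem to $f$ produces an infinite homogeneous set $H \subseteq \N$. Let $Y$ be an initial segment of $H$ with $|Y| \ge \min(H) + k$, which exists because $H$ is infinite. Then $Y$ is homogeneous, $\min(Y) = \min(H)$, and $|Y| \ge \min(Y) + k - 1$, so $Y$ is relatively large. But $Y \subseteq [M+1]$ for $M = \max(Y)$, so the restriction $f|_{[M+1]}$ has a relatively large homogeneous subset, contradicting its badness. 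This contradiction furnishes the required $M$.

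I expect the genuine obstacle to be the infinite Ramsey theorem, the sole essentially infinitary ingredient, which I would establish separately by induction on the exponent $e$: the case $e = 1$ is the infinite pigeonhole principle, and the inductive step builds an infinite sequence $a_0 < a_1 < \cdots$ by repeatedly selecting an element and thinning to an infinite subset on which the color of each $e$-set through that element is governed by its remaining $e-1$ members, and then applies the pigeonhole principle to the induced coloring of the sequence. The compactness packaging itself is routine once badness is seen to be inherited under restriction; the one conceptual point worth flagging is that the argument is unavoidably nonconstructive---in keeping with \textup{PH} outgrowing every provably total function of the ambient arithmetic---so it produces no explicit bound on $M$ in terms of $e$, $r$, and $k$.
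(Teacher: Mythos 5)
Your argument is correct, and it is the canonical one: assemble the supposed counterexample colorings into a tree ordered by restriction, check that badness is inherited under restriction (you flag exactly the right point here), apply K\"onig's lemma to get a single coloring of all $e$-subsets of the naturals, and then cut down an infinite homogeneous set from the infinite Ramsey theorem to a long enough initial segment, which is automatically relatively large. One thing to be aware of: the paper itself contains no proof of this statement. PH is quoted purely as background, with a citation to Paris and Harrington, and the paper's actual contributions concern exponential prefixed polynomial representations of PH$^2$ and of Goodstein's theorem, not the truth of PH. So there is no in-paper argument to compare yours against; measured instead against the standard proof in the literature (which is how Paris and Harrington establish that the statement is true, hence provable in ZFC though not in PA), your write-up matches it essentially step for step, including the closing remark that the nonconstructivity is intrinsic---any bound on $M$ as a function of $k$ (for fixed $e$ and $r$) eventually dominates every function provably total in Peano arithmetic, which is precisely the independence phenomenon the paper is packaging into Diophantine form.
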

Our focus will be on the Paris-Harrington theorems for pairs, whose combinatorial principle will be referred to here on out as PH$^2$:
\begin{theorem}[PH$^2$] \label{ph2}
For every number $k$, there exists a number $M$ such that for every coloring $f$ of 2-subsets of $[M+1]$ into $r$ colors, there is a $f$-homogeneous $Y \subseteq [M+1]$ of size at least $\operatorname{min}(Y) + k - 1$.
\end{theorem}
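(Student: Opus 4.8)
The plan is to prove $\mathrm{PH}^2$ by reducing it to the infinite Ramsey theorem for pairs together with a compactness (König's lemma) argument; throughout I read the statement with $r$ universally quantified, as in the full theorem PH, so the goal is: for all $k$ and $r$ there is an $M = M(k,r)$ with the stated homogeneity property. I write $[X]^2$ for the set of $2$-element subsets of a finite $X$, and call a homogeneous $Y$ \emph{relatively large} when $|Y| \ge \min(Y) + k - 1$. First I would fix $k$ and $r$ and argue by contradiction, assuming that for \emph{every} $M$ there is a colouring $f \colon [\,[M+1]\,]^2 \to [r]$ admitting no relatively large homogeneous $Y \subseteq [M+1]$; call such a colouring \emph{bad}. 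The crucial monotonicity point is that the restriction of a bad colouring on $[N+1]$ to $[N]$ is again bad, since any relatively large homogeneous set witnessing goodness of the restriction lies in $[N] \subseteq [N+1]$ and would already witness goodness of the original colouring.

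Consequently the bad colourings, ordered by restriction, form a tree $T$ that is finitely branching (at most $r^{\binom{N}{2}}$ colourings at level $N$) and, by the failure assumption, nonempty at every level, hence infinite. I would then apply König's lemma to extract an infinite branch of $T$, whose successive colourings cohere into a single $F \colon [\,\N\,]^2 \to [r]$ every finite restriction of which is bad; in particular $F$ possesses no \emph{finite} relatively large homogeneous set.

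Next I invoke the infinite Ramsey theorem for pairs to obtain an infinite $F$-homogeneous $H \subseteq \N$. Setting $m = \min(H)$ and letting $Y$ consist of the $m + k - 1$ smallest elements of $H$ (possible since $H$ is infinite) yields a finite homogeneous set with $\min(Y) = m$ and $|Y| = m + k - 1 = \min(Y) + k - 1$, i.e. a relatively large one. Choosing $N$ large enough that $Y \subseteq [N+1]$ contradicts the badness of the restriction of $F$ at level $N$, and this contradiction establishes $\mathrm{PH}^2$.

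I expect the main difficulty to lie not in the logical skeleton but in two supporting points. The first is the infinite Ramsey theorem for pairs itself, which for a self-contained account must be proved by the usual iterated-pigeonhole construction—repeatedly passing to an infinite monochromatic neighbourhood of a chosen vertex and diagonalizing. The second, and the one genuinely pertinent to this paper, is that the compactness route is purely existential and yields no handle on $M(k,r)$; since the eventual aim is to capture $\mathrm{PH}^2$ by an exponential prefixed polynomial equation, a later stage will require a \emph{finitary} proof furnishing an explicit (provably recursive) bounding function, obtained by replacing König's lemma with a direct stepping-up estimate on the finite Ramsey function that is adjusted to enforce the largeness condition $|Y| \ge \min(Y) + k - 1$.
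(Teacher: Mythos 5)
Your proof is correct: the monotonicity of badness under restriction, the K\"onig's lemma extraction of a single colouring $F \colon [\N]^2 \to [r]$ all of whose finite restrictions are bad, and the final contradiction via an initial segment of an infinite homogeneous set is the standard and valid derivation of PH$^2$ from the infinite Ramsey theorem for pairs. Be aware, however, that the paper does not prove PH$^2$ at all: it states the principle with a citation to Paris--Harrington and treats it as a known independent statement, since the paper's actual contribution is the construction of an exponential prefixed polynomial equation \emph{equivalent} to PH$^2$, not a proof of the combinatorial fact. So there is no combinatorial argument in the paper to compare yours against; yours is the proof the paper implicitly leans on, and its use of manifestly non-finitary tools (K\"onig's lemma, infinite Ramsey) is not a defect but a necessity, given that PH$^2$ implies the 1-consistency of $I\Sigma_1$ and therefore admits no proof formalizable there.

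One correction to your closing paragraph: the expectation that a later stage will require a finitary proof yielding a provably recursive bound on $M(k,r)$ is misplaced, in two ways. First, no such bound exists --- the Paris--Harrington function for pairs eventually dominates every $I\Sigma_1$-provably recursive function, which is precisely the content of the unprovability. Second, the paper never needs one: writing PH$^2$ as an EPPE is a matter of \emph{expressing} the statement (coding the colouring by the pair $(a,b)$, the candidate homogeneous set by $(c,d)$, and eliminating the bounded universal quantifiers by the Davis--Putnam--Robinson machinery of \S 2), with $M$ simply remaining a quantified variable or parameter of the resulting equation. Representation does not require proof, and conflating the two would obscure why the resulting equation can serve as a concrete witness of incompleteness.
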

For PH$^2$ and $r > 2$, (where $r$ is the only free variable which represents the colors), the Bovykin-De Smet prefixed polynomial representation \cite[Theorem 2]{bovykin} is:
\begin{equation} \label{ppr1}
\begin{split}
& \forall k \mbox{            } \exists M \mbox{            } \forall ab \mbox{            } \exists cdAX \mbox{            } \forall xy \mbox{            } \exists BCF \mbox{            } \forall fg \mbox{            }\exists ehilnpq \\
& [x \cdot (y + B - x) \cdot (A+k+B-y) \cdot ((((f-A)^2 (g-1)^2)\cdot((f-B)^2 + (g-x)^2)\\
&\cdot((f-C)^2 + (g-y)^2)-h-1)\cdot((dgi + i -c+f)^2 + (f+h-dg)^2)+(B+l+1-C)^2 \\
&+ (C+n-M)^2 + (F+e-b(B+C^2))^2+(bp(B+C^2)+p-a+F)^2+((F-X)^2-qr)^2)].
\end{split}
\end{equation}
Although in the case of PH$^2$, the prefixed polynomial expression covers only a few lines, the challenge comes when transforming this polynomial from its $\Pi^0_6$ form to its EFA-provably equivalent $\Pi^0_2$ form. Here, bounding the universal quantifiers and then eliminating them introduces a drastic increase in the number of variables of the original prefixed polynomial representation, to the point that the resulting $\Pi^0_2$ form is too long to be practical to write. The transformation of formulas containing bounded universal quantifiers into equivalent formulas containing only existential quantifiers in the theory of Diophantine equations is a powerful technique which has many applications, such as showing in a straightforward manner that the set of primes is Diophantine, constructing a universal Diophantine equation, or demonstrating that many famous problems can be reformulated in terms of the unsolvability of a particular Diophantine equation (since many of these problems such as Goldbach's conjecture, the Riemann hypothesis, and the four color theorem can be formulated in the form $\forall n \mbox{            } P(n)$, where $P$ is a decidable property over natural numbers). However, naive attempts to obtain a Diophantine representation (namely, a direct application of the results of Davis, Putnam, and Robinson \cite{dpr} and Matiyasevich \cite{mat1970}, and possibly with some slight modifications but with no drastic tricks) for PH$^2$ yields unwriteable representations. \newline
\indent We discuss the methods that are used to eliminate the bounded universal quantifier and present several ways of conserving the large number of variables typically introduced by this process in order to obtain the following result:
\begin{theorem}[Unprovability by primitive recursive means] \label{result1}
There exists an exponential Diophantine equation $E_1(k, M, a, b, r, t_1,\ldots,t_{138})$ and a Diophantine equation $D_1(k, M, a, b, r, t_1, \ldots, t_{347})$ (both with $k, M, a, b,$ and $r$ as parameters) such that for every $r > 2$
\begin{equation*}
E_1(k, M, a, b, r, t_1,\ldots,t_{138}) = 0
\end{equation*}
has a solution in natural numbers $t_1,\ldots,t_{138}$ and
\begin{equation*}
D_1(k, M, a, b, r, t_1, \ldots, t_{347}) = 0
\end{equation*}
has a solution in natural numbers $t_1,\ldots,t_{347}$ is equivalent to the combinatorial principle of the Paris-Harrington theorem for pairs, equivalent to the 1-consistency of $I\Sigma_1$ and thus not provable in $I\Sigma_1$ (but provable in $I\Sigma_2$).
\end{theorem}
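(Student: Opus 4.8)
The plan is to begin from the Bovykin--De Smet prefixed polynomial representation \eqref{ppr1}, which is EFA-provably equivalent to $\mathrm{PH}^2$, and to descend from its $\Pi^0_6$ shape to an exponential Diophantine equation $E_1$ by bounding and then eliminating the nested universal quantifier blocks, retaining the leading $\forall k\,\exists M$ prefix. The variables $k,M,a,b,r$ are to survive as the declared free parameters, while the inner existential witnesses $c,d,A,X,B,C,F,e,h,i,l,n,p,q$ together with every auxiliary variable produced by the elimination are collapsed into a single existential block $\exists t_1\cdots t_{138}$. Once this EFA-provable equivalence is secured, the independence claims are immediate and require no further work: they are the known logical content of $\mathrm{PH}^2$, inherited from the Paris--Harrington analysis and its refinement placing the combinatorial principle for pairs at the level of the $1$-consistency of $I\Sigma_1$, unprovable in $I\Sigma_1$ but provable in $I\Sigma_2$ \cite{paris-harrington}.

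First I would render every nested universal quantifier bounded. Combinatorially the variables $x,y,f,g$ (and, should they be eliminated as well, $a,b$) index elements of $[M+1]$ and codes of $r$-colorings of its $2$-subsets, so each admits an explicit bound polynomial or exponential in $M$ and $r$; these are exactly the bounds that render the matrix of \eqref{ppr1} decidable for fixed parameters. Writing the successive bounds as $\beta_i(M,r)$ converts the formula into one of the shape
\[
\forall k\,\exists M\,\bigl(\cdots\,\forall x,y\le\beta_1\,\exists\cdots\,\forall f,g\le\beta_2\,\exists\cdots[\,P=0\,]\bigr),
\]
and the same analysis supplies explicit bounds on each existential block. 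Checking these bounds, and verifying that the bounded formula is EFA-provably equivalent to the original, is routine, but must be done carefully so that the coding of the next step incurs no carries.

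The crux is the elimination of the bounded universal blocks, performed from the innermost outward. Here I would use the standard device of coding each sequence of witnesses $(\vec z(0),\dots,\vec z(\beta))$ as the digits of a single ``catalog'' number $Z_i=\sum_{j\le\beta}z_i(j)\,B^{j}$ in a base $B$ chosen larger than every witness, so that a statement $\forall j\le\beta\,\exists\vec z\,[P=0]$ becomes an unbounded existential assertion that $P$, evaluated digit-by-digit at the catalog numbers, vanishes without carrying --- this is precisely the Davis--Putnam--Robinson elimination of bounded universal quantifiers \cite{dpr}, whose engine is the exponential Diophantine character of $\binom{n}{k}$, of $n!$, and of positional digit extraction. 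The entire difficulty of the theorem, and the reason for the ``new devices'' of the abstract, is that a naive pass inflates the variable count catastrophically; to reach only $138$ variables I would pair the two variables of each universal block under a single pairing function before coding, share the base and catalog variables across the successive elimination layers, and exploit genuine exponentiation rather than simulating it, thereby sparing the variables a purely polynomial coding would force at this stage. Tracking the count through each layer is the main obstacle and accounts for the bulk of the bookkeeping; its output is $E_1(k,M,a,b,r,t_1,\dots,t_{138})$.

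Finally, to pass from $E_1$ to the purely polynomial $D_1$, I would replace each exponential subterm by its Diophantine graph, using Matiyasevich's theorem \cite{mat1970} that $a=b^c$ is Diophantine. Each replacement contributes a fixed, known number of auxiliary variables, and summing over the exponentials occurring in $E_1$ accounts for the growth from $138$ to $347$, yielding $D_1(k,M,a,b,r,t_1,\dots,t_{347})$. Since every step above is an EFA-provable equivalence, solvability of $E_1$ --- equivalently of $D_1$ --- for the admissible parameters is EFA-provably equivalent to \eqref{ppr1}, hence to $\mathrm{PH}^2$, and the stated logical conclusions follow as noted.
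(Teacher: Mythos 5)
Your logical frame (a chain of EFA-provable equivalences followed by the known proof-theoretic status of PH$^2$) matches the paper, but two of your concrete choices would not deliver the stated variable counts, and those counts are the actual content of the theorem. For $E_1$: you start from the full $\Pi^0_6$ prefix \eqref{ppr1} and eliminate the two nested universal blocks $\forall xy$ and $\forall fg$ ``from the innermost outward.'' The paper explicitly avoids this. It works instead from Bovykin--De Smet's \emph{intermediate} representation \eqref{e1}, in which the $\forall fg$ block never appears (the remainder and congruence conditions are kept as such rather than re-encoded by further quantifiers), treats $k,M,a,b,r$ as parameters, and merges the single remaining pair $\forall x,\forall y$ into one bounded quantifier $\forall t < z+1$ by Cantor pairing, giving \eqref{e6}. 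Consequently the Davis--Putnam--Robinson elimination is applied exactly \emph{once}, to a matrix with $m=24$ inner existential variables. Your plan applies DPR twice, and the outer application acts on a formula already containing all the machinery produced by the inner one, so every variable of that enlarged matrix then demands its own binomial-divisibility condition of type \eqref{eq3} at several variables apiece. For calibration: by the paper's own estimate, even a \emph{single} unoptimized elimination of the one quantifier in \eqref{e6} costs about 233 variables, so an iterated elimination starting from \eqref{ppr1} lands far beyond 138. Reaching 138 also requires the two devices of \S 2 that you do not use: replacing the factorial divisibility condition by the strong inequality \eqref{save2} (Matiyasevich's pigeonhole trick), and Theorem~\ref{t3}, which expresses each condition $\binom{q}{z+1} \mid \binom{z_l}{w}$ with only four new variables via the partial binomial expansion of $\left((y_1+1)2^{z_l}+1\right)^{z_l}$.

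For $D_1$: your route --- replace each exponential subterm of $E_1$ by its Diophantine graph --- is precisely what the paper declares insufficient: even using the economical five-variable exponential representation of \cite{mat-rob1975}, that substitution yields a 638-variable equation, not 347. The count 347 is reached by a different mechanism: the binomial-coefficient relations are replaced wholesale by the ten-variable Diophantine representation of Theorem~\ref{t2}, built on the Matiyasevich--Robinson relation-combining theorem (Theorem~\ref{l1}) together with Theorem~\ref{l2} on Pell-equation solutions, and only the exponentials in the strong inequality for $q$ receive the five-variable treatment. So while your argument would establish that \emph{some} exponential Diophantine and Diophantine equations EFA-equivalent to PH$^2$ exist (and the independence conclusions you draw from \cite{paris-harrington} are handled the same way the paper handles them), it does not establish the theorem as stated, namely with 138 and 347 unknowns; the missing ideas are the intermediate representation \eqref{e1} plus Cantor pairing, the variable-saving devices of \S 2, and the relation-combining route to the binomial coefficient.
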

We will prove Theorem ~\ref{result1} in \S 3.1 and provide an explicit representation of $E_1(k, M, a, b, r, t_1, ..., t_{138})$. In \S 3.2, we consider unprovability in Peano arithmetic and obtain an explicit EPPE equivalent to Goodstein's theorem, obtaining the following result:
\begin{theorem}[Unprovability in Peano arithmetic] \label{result2}
There is a 181 variable exponential prefixed polynomial equation equivalent to Goodstein's theorem, unprovable in Peano arithmetic.
\end{theorem}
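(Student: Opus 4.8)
The plan is to mirror the strategy used for Theorem~\ref{result1}, replacing the combinatorial principle PH$^2$ with the statement of Goodstein's theorem and exploiting the fact that an EPPE tolerates variables in the exponents, which is precisely what the hereditary base expansions demand. First I would fix the standard $\Pi^0_2$ formalization
\[
\forall n \; \exists s \; \big[\, G(n,s) = 0 \,\big],
\]
where $G(n,s)$ denotes the $s$-th term of the Goodstein sequence started at $n$ (with the convention that the base used at step $t$ is $t+2$, the bump sends it to $t+3$, and once the sequence hits $0$ it remains $0$). Since the Goodstein termination relation is primitive recursive, the Davis--Putnam--Robinson--Matiyasevich machinery guarantees that it has an exponential Diophantine definition; the whole task is to produce one compact enough to leave a writeable $181$-variable prefix after the quantifier bookkeeping.

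The heart of the construction is an exponential Diophantine encoding of the base-bump operator. Writing $h(m,b,c)$ for the number obtained by expanding $m$ in hereditary base $b$ and then syntactically substituting $c$ for $b$ throughout the resulting tower, one has the defining recursion $h(m,b,c)=m$ for $m<b$ and $h(m,b,c)=\sum_i c_i\, c^{\,h(e_i,b,c)}$ when $m=\sum_i c_i b^{e_i}$ with $0\le c_i<b$; the Goodstein step is then $G(n,0)=n$ and $G(n,t+1)=h\big(G(n,t),\,t+2,\,t+3\big)-1$. I would code the finite Goodstein trajectory $G(n,0),\dots,G(n,s)$ by a single G\"odel $\beta$-function (or positional) code, and code each hereditary expansion as a finite tree of coefficient--exponent pairs. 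The recursion for $h$ is then expressed by a bounded universal quantifier asserting that at every node of the coded tree the local exponential relation for the $c^{\,h(e,b,c)}$-term holds and the digit bound $c_i<b$ is met, which is exactly the kind of $\forall_{\le}$ that the paper's elimination lemmas are designed to remove.

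Next I would bound and eliminate those bounded universal quantifiers exactly as in \S3.1: first replace the unbounded digit and tree quantifiers by bounded ones (the node count and the digit sizes are all controlled by $m$ and $b$), then apply the exponential-Diophantine elimination of bounded universal quantifiers to collapse each such quantifier into a block of existentials, using the variable-conservation devices introduced for PH$^2$ to reuse coding variables across the several places where sequence coding, positional digit extraction, and the $c^{\,h}$ exponentiation all appear. Because exponentiation is available as a primitive in an EPPE, the digit-extraction and tower-substitution steps that were expensive in the purely polynomial setting stay cheap, which is what makes a writeable $181$-variable form possible at all (a pure Diophantine form would, as in Theorem~\ref{result1}, require replacing each exponential by its polynomial simulation and balloon the count). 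A careful tally of the variables contributed by (i) the trajectory code, (ii) the per-step hereditary-expansion codes, (iii) the relation-coding left after quantifier elimination, and (iv) the outer quantifiers $\forall n\,\exists s$ then yields the stated figure, leaving a $\Pi^0_2$ prefixed exponential polynomial equation.

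The main obstacle I anticipate is the faithful and economical encoding of the hereditary base expansion and the base-bump operator: unlike the flat base-$b$ digit expansion, the hereditary expansion is genuinely nested, so the ``for every node'' quantifier ranges over a tree whose depth is itself data-dependent, and one must code the entire tree (not merely a list of digits) while keeping the verification relation exponential Diophantine. Getting the bounds on the tree code tight enough that the subsequent elimination of the bounded universal quantifier does not explode the variable count, and checking that each conservation trick used for PH$^2$ still applies when an exponential with both a variable base and a variable exponent sits inside the matrix, is where the real work, and the risk to the precise $181$ bound, lies.
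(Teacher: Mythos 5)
Your plan diverges from the paper at the decisive point, and the divergence is fatal to the stated $181$-variable bound. The paper never eliminates the bounded universal quantifiers in the Goodstein case. Its proof of Theorem~\ref{result2} takes $m>1$ and $a>1$ as parameters and writes Goodstein's theorem as an \emph{alternating} prefixed formula $\exists r\,\forall i\le r\,\exists\cdots\,\forall n\le L\,\exists\cdots\,\forall k\le l\,\exists\cdots$, whose matrix is assembled from explicit exponential Diophantine definitions of auxiliary predicates: the highest power $\HP(n)$ of the base-$(i+a)$ expansion, the base-replacement operator $\RB$, digit extraction via $\Elem$, the exponent-access predicate $\operatorname{Exp}_k$, and G\"odel coding of the level sequence $s_0,\ldots,s_L$ that flattens the hereditary recursion into finitely many ``levels.'' In the paper, ``expanding'' means substituting these polynomial definitions (with exponentiation kept as a primitive), \emph{not} removing quantifiers; the figure $181$ is exactly the number of quantified variables in the resulting alternating prefix, in which six bounded universal quantifiers survive. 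This is precisely what the notion of prefixed polynomial equation permits, and it is the only reason the representation stays writeable.

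By contrast, you propose to run the \S 3.1 elimination machinery on all the bounded universal quantifiers so as to land in a $\Pi^0_2$ form, and this cannot come near $181$ variables. In the PH$^2$ case there was, after Cantor pairing, a \emph{single} bounded universal quantifier scoping over $24$ inner unknowns, and its elimination via \eqref{e7} and \eqref{e10} already cost $138$ variables in the exponential Diophantine form. Any faithful arithmetization of the hereditary base-bump --- yours included, with its ``for every node of the coded tree'' condition --- requires several \emph{nested} bounded universal quantifiers: over Goodstein steps, over levels of the hereditary expansion, and over digit positions. These cannot be merged by the Cantor-pairing trick, because the bound of each inner universal quantifier (the analogues of $L$ and $l$) is an existentially quantified variable lying \emph{between} the universal quantifiers, so the eliminations must proceed innermost-out, and each round multiplies the stock of inner unknowns the next round must handle (roughly four new variables per inner unknown by Theorem~\ref{t3}, plus the divisibility, inequality, and binomial-coefficient overhead of \eqref{eq2} and \eqref{eq3}). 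The count compounds far beyond $181$; this blow-up is exactly why the paper reserves quantifier elimination for PH$^2$ and keeps the alternations for Goodstein. So while your construction would, by the Davis--Putnam--Robinson--Matiyasevich machinery, produce \emph{some} exponential Diophantine equation equivalent to Goodstein's theorem, it does not prove the theorem as stated: the missing idea is that the $181$-variable equation is obtained by \emph{not} eliminating the bounded universal quantifiers, and expanding only the defined arithmetic predicates into polynomial form.
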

 Our results offer insight in preserving a writeable representation when we reduce the quantifier complexity of the original prefixed polynomial representations of Bovykin and De Smet \cite{bovykin}, therefore not restricting us to using alternations of universal and existential quantifiers in order to explicitly illustrate deep logical phenomena. Not only that, but our consideration of exponential prefixed polynomial equations allows one to obtain short representations of ``natural'' independent statements for which exponentiation is inherent in the formulation, such as Goodstein's theorem.
\section{Elimination of the bounded universal quantifier}
There are several methods of transforming formulas with bounded universal quantifiers to those having only existentially bound variables in the language of Diophantine predicates. The most well-known is the Bounded Quantifier Theorem of Davis, Putnam, and Robinson \cite{dpr}, which uses the Chinese remainder theorem to establish this equivalence. Bounded universal quantifiers can also be eliminated by way of Turing machines (presented in detail in Chapter 6.1 of Matiyasevich \cite{mat-book}), a rather immediate consequence of Matiyasevich's direct method in \cite{mat76} of simulating Turing machines by Diophantine equations. Finally, a third method of going about this elimination involves summations of generalized geometric progressions, based on a technique first proposed by Matiyasevich in \cite{mat79} and presented for the first time in Chapter 6.3 of Matiyasevich's book \cite{mat-book}. There are advantages and disadvantages to each method. The method via Turing machines, though constructive, is rather roundabout. With the method involving summations, although straightforward, it becomes impractical to extract the resultant Diophantine equation from the heavy use of generalized geometric progressions after the elimination of the bounded universal quantifier when the expression obtained \emph{prior} to this elimination is of even moderate size. Thus, this is our motivation for preferring the method of Davis, Putnam, and Robinson via the Chinese remainder theorem because it is a straightforward number-theoretic method which produces, in most cases, a visualizable Diophantine equation. However, as we will explicitly demonstrate below, this does not mean that the representation can be, practically speaking, explicitly written down since the downside is the drastic number of variables introduced. \newline
\indent For example, for the following:
\begin{equation} \label{eq1}
\forall y < b \mbox{          } \exists x_1,\ldots,x_m [G(\underline{a},y,x_1,\ldots,x_m) = 0],
\end{equation}
where $\underline{a}$ represents the parameter(s) of the polynomial $G$, the Chinese remainder theorem method results in the following system of Diophantine conditions solvable in the unknowns $q, w, z_0,\ldots,z_m$ provided that \eqref{eq1} holds\footnote{This is based on the original construction of the Bounded Quantifier Theorem of Davis, Putnam, and Robinson \cite{dpr}, with some minor modifications by Matiyasevich \cite[Ch. 6.2]{mat-book}. There have been more drastic modifications to the Bounded Quantifier Theorem, namely by Matiyasevich \cite{mat72}\cite{mat73} and by Hirose and Iida \cite{hi73}, and we incorporate some of these results in our presentation.}:
\begin{equation} \label{eq2}
\begin{split}
&G(\underline{a},z_0,z_1,\ldots,z_m) \equiv 0 \mod \binom{q}{b}, \\
& z_0 = q, \\
& b!(b+w+B(\underline{a},b,w))! \mid q+1,
\end{split}
\end{equation}
\begin{equation} \label{eq3}
\begin{split}
& \binom{q}{b} \mid \binom{z_1}{w}, \\
& \hspace{0.31in} \vdots \\
& \binom{q}{b} \mid \binom{z_m}{w},
\end{split}
\end{equation}
where the polynomial $B(\underline{a},b,w)$ is obtained from $G(\underline{a},y,x_1,\ldots,x_m)$ by changing the signs of all its negative coefficients and systematically replacing $y$ by $b$ and $x_1,\ldots,x_m$ by $w$. \newline
\indent Now, the major contributing factors to the increase in the number of variables are the representations of the factorial and binomial coefficient. Let $\Gamma$ denote the number of variables in the Diophantine representation of the exponential function, then the Diophantine representations of the factorial and binomial coefficient as presented by Davis, Matiyasevich, and Robinson \cite[\S 1]{dmr1976} involve $10+5(\Gamma+1)$ variables and $6+3(\Gamma+1)$ variables, respectively. And the Diophantine representations of the factorial and binomial coefficient as later presented by Matiyasevich \cite[Chapter 3.4]{mat-book} involve $10+6(\Gamma+1)$ variables and $5 + 4(\Gamma+1)$ variables, respectively. If we used an economical (with respect to the number of variables) Diophantine representation of the exponential function, for instance, the result obtained by Matiyasevich and Robinson \cite{mat-rob1975} with only five variables, then the total number of variables for the factorial and binomial coefficient presented in \cite{dmr1976} would be 40 variables and 24 variables, respectively; and the results obtained in \cite{mat-book} would be 46 variables and 29 variables, respectively. Now, note that the representation of the exponential function in five variables obtained in \cite{mat-rob1975} results in a polynomial of high degree and is a somewhat unruly expression (though obviously writeable), for both the binomial coefficient and the factorial. Furthermore, it is important to note that \eqref{eq3} involves $m$ binomial coefficients (I am excluding $\binom{q}{b}$). Thus, even if we used the least number of variables in representing the binomial coefficient, 25 variables (since for each binomial coefficient, we have to introduce a new variable $y_l$, such that $y_l = \binom{z_l}{w}$, for each $l = 1,\ldots,m$), then \eqref{eq3} would require $25m$ variables to represent it. For $m = 3$, a possibly small Diophantine equation with a single universally bound variable $y$, this would mean the introduction of 75 new variables just to represent the system of conditions in \eqref{eq3}!\newline
\indent Thus, the first step toward mitigating the increase of variables introduced by \eqref{eq2} and \eqref{eq3} the number of variables involved in the Diophantine representations of the factorial and the binomial coefficient must be drastically reduced. Fortunately, it turns out that one can eliminate the need to show that factorial is Diophantine. By a result of Matiyasevich \cite{mat72}\cite{mat73}, one can use the multiplicative version of Dirichlet's box principle to replace the condition in \eqref{eq2}:
\begin{equation} \label{save1}
b!(b+w+B(\underline{a},b,w))! \mid q+1
\end{equation}
with the sufficiently strong inequality,
\begin{equation} \label{save2}
q > b + (b+1)^{b+1}((b+1)^{b+1}B(\underline{a},b,w))^{w^m}.
\end{equation}
As can easily be seen, \eqref{save1} would require the introduction of $10(\Gamma+1) + 22$ variables (using the representation of the factorial provided by Davis, Matiyasevich, and Robinson \cite[\S 1]{dmr1976}) whereas \eqref{save2} would only require the introduction of $2(\Gamma+1)+2$ variables. Hence, even if one is content with just an exponential Diophantine representation, then 20 variables are already conserved, and if one would prefer a Diophantine representation even with an economical representation of the exponential function in only five variables, then 68 variables are conserved! \newline
\indent However, what has not really been proposed so far is a reduction in the number of variables introduced by \eqref{eq3}, the system of binomial coefficients, since that is the \emph{primary} reason why so many variables are introduced in the elimination of the bounded universal quantifier (as demonstrated above in the case for $m = 3$). We will prove that one can represent the binomial coefficient in only 10 variables and obtain an explicit representation that covers less than a page at 18 variables. \newline
\indent In our construction of a representation of the binomial coefficient in only 10 variables, we will rely on the relation-combining theorem of Matiyasevich and Robinson \cite{mat-rob1975}. The Matiyasevich-Robinson relation-combining theorem allows one to cheaply define certain combinations of relations than by defining each separately by an equation and then combining the equations. While it is economical with respect to the number of variables of the resultant equation, it should be noted that the relation-combining theorem is rather uneconomical with respect to the degree\footnote{A version of the relation-combining theorem that is more efficient with respect to the degree was later worked out by Matiyasevich, presented as Theorem 5.1 of Jones \cite{jones-mersenne}. However, this does not change the fact that the resultant polynomial can cover several pages, which is why we rely on more elementary techniques to have our representation of the binomial coefficient cover half a page, with an introduction of only eight more variables.}. The theorem is stated as follows:
\begin{theorem}[Relation-combining theorem] \label{l1}
Let $\Box$ denote a perfect square. For all integers $A_1,\ldots,A_q, B, C, D$ with $B \ne 0$, the conditions $A_i = \Box \mbox{               } (i = 1,\ldots,q)$, $B \mid C$, and $D > 0$ all hold if and only if $M_q(A_1,\ldots,A_q,B,C,D,n) = 0$ for some $n$, where $M_q$ is the following $2^q$-fold product over all combinations of signs
\begin{equation*}
M_q = \prod (B^2n + C^2 - B^2(2D-1)\cdot(C^2+W_q\pm\sqrt{A_1}\pm\sqrt{A_2}W\pm\ldots\pm\sqrt{A_q}W^{q-1})),
\end{equation*}
where 
\begin{equation*}
W = 1 + \sum_{i = 1}^{q}A_i^2.
\end{equation*}
\end{theorem}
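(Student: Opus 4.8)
The plan is to prove the equivalence by reading $M_q$ as the superposition of three independent ``gadgets'': a square-combining gadget built from the $2^q$-fold product over sign patterns, a divisibility gadget carried by the shape $B^2n + C^2 = B^2(\cdots)$, and a positivity gadget carried by the factor $2D-1$. Before analysing either implication I would first check that $M_q$ really is a polynomial with integer coefficients in $A_1,\dots,A_q,B,C,D,n$, despite the surds inside it. This follows by induction on $q$: writing the generic factor as $L - c\sum_i \varepsilon_i\sqrt{A_i}\,W^{i-1}$ with $L = B^2n + C^2 - B^2(2D-1)(C^2+W_q)$ and $c = B^2(2D-1)$, I would pair the two factors with $\varepsilon_q=\pm 1$; their product is $(L - c\sum_{i<q}\varepsilon_i\sqrt{A_i}W^{i-1})^2 - c^2A_qW^{2(q-1)}$, which no longer contains $\sqrt{A_q}$, and iterating eliminates every surd. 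Since $W = 1 + \sum_i A_i^2 \in \Z$, the whole expression is a genuine integer polynomial.

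For the forward implication, assume $A_i = a_i^2$ for $a_i\in\N$, $B\mid C$, and $D>0$. I would single out the all-plus factor $B^2n + C^2 - B^2(2D-1)(C^2 + W_q + \sum_i a_iW^{i-1})$ and solve it for $n$. Because $B\mid C$ gives $B^2\mid C^2$, the quantity $n = (2D-1)(C^2 + W_q + \sum_i a_iW^{i-1}) - C^2/B^2$ is an integer; because $2D-1\ge 1$ and the offset $W_q$ is chosen to dominate $\sum_i\sqrt{A_i}W^{i-1}$, it is nonnegative. With this $n$ one factor vanishes, hence $M_q=0$.

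The reverse implication is where the real work lies. If $M_q=0$ for some $n\in\N$, then over $\C$ one factor vanishes, giving
\begin{equation*}
B^2n + C^2 = B^2(2D-1)\Bigl(C^2 + W_q + \sum_{i=1}^{q}\varepsilon_i\sqrt{A_i}\,W^{i-1}\Bigr)
\end{equation*}
for a fixed sign pattern $\varepsilon$. The hard part will be extracting ``$A_i=\Box$'' from the mere rationality of $\sum_i\varepsilon_i\sqrt{A_i}W^{i-1}$ forced by this identity. I would argue by Galois theory over $\Q$: letting $\sigma$ run through $\Gal(\Q(\sqrt{A_1},\dots,\sqrt{A_q})/\Q)$ and using that the sum is fixed by every $\sigma$, each $\sigma$ must satisfy $\sum_{i\in F_\sigma}\varepsilon_i\sqrt{A_i}W^{i-1}=0$, where $F_\sigma$ is the set of indices whose square root $\sigma$ negates. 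Here the choice $W = 1 + \sum_i A_i^2$ is decisive: it forces $\sqrt{A_i}<W^{1/4}$, so in any such vanishing subsum the top term $\sqrt{A_j}\,W^{j-1}$ (where $j=\max F_\sigma$, with $A_j$ a nonsquare) strictly dominates $\sum_{i<j}\sqrt{A_i}W^{i-1}$, a contradiction unless $F_\sigma=\varnothing$ (the same magnitude estimate also rules out negative $A_i$, whose square roots are purely imaginary and must cancel separately). Thus every $\sigma$ is trivial, each $\sqrt{A_i}\in\Q$, and being a rational square root of an integer each is in fact an integer, i.e. $A_i=\Box$.

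It then remains to read off the other two conditions from the same identity, now that the $\sqrt{A_i}$ are known to be integers. Rewriting it as $C^2 = B^2\bigl[(2D-1)(C^2 + W_q + \sum_i\varepsilon_i\sqrt{A_i}W^{i-1}) - n\bigr]$ exhibits $C^2$ as an integer multiple of $B^2$, whence $B\mid C$; and since the left-hand side $B^2n + C^2$ is nonnegative while the bracketed factor $C^2 + W_q + \sum_i\varepsilon_i\sqrt{A_i}W^{i-1}$ is positive by the choice of $W_q$, the factor $2D-1$ must be positive, i.e. $D\ge 1$, so $D>0$ (the degenerate case $C=0$ being handled separately by noting $n\ge 0$ still forces $2D-1>0$). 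I expect the genuinely delicate step to be the surd-elimination in the reverse direction; the divisibility and positivity extractions, together with the polynomiality check, are essentially bookkeeping once the role of $W=1+\sum_i A_i^2$ has been pinned down.
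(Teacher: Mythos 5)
You should know at the outset that the paper contains no proof of this statement: Theorem~\ref{l1} is imported verbatim from Matiyasevich and Robinson \cite{mat-rob1975} and used as a black box in the proof of Theorem~\ref{t2}, so your argument can only be measured against the original source, not against anything in this paper. On its own terms your proposal is correct, and it is in substance the classical proof: integrality of $M_q$ by pairing sign patterns to eliminate surds one at a time; the explicit witness $n=(2D-1)(C^2+W^q+\sum_i a_iW^{i-1})-C^2/B^2$, integral because $B^2\mid C^2$ and nonnegative because $2D-1\ge 1$ (note that for the all-plus factor the surd sum is already nonnegative, so the ``domination'' you invoke is not even needed in this direction); and, for the converse, rationality of the surd sum extracted from a vanishing factor, followed by conjugation and a dominant-term estimate. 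The step you flagged as delicate is indeed the crux, and your treatment of it is the right one: one cannot argue by flipping a single $\sqrt{A_j}$ at a time, since multiplicative relations among the $A_i$ (e.g. $A_1=2$, $A_2=3$, $A_3=6$) mean no automorphism negating exactly one root need exist; one really must handle a vanishing subsum $\sum_{i\in F_\sigma}\varepsilon_i\sqrt{A_i}W^{i-1}=0$ over an arbitrary nonempty $F_\sigma$, split into real and imaginary parts when some $A_i<0$, and then use the size of $W$. For the record, the estimate does close: every $i\in F_\sigma$ has $A_i$ a nonsquare, hence $|A_i|\ge 1$ and $W\ge 2$, and with $\sqrt{|A_i|}\le (W-1)^{1/4}$ the tail is at most $(W-1)^{1/4}\frac{W^{j-1}-1}{W-1}\le W^{j-1}-1<\sqrt{|A_j|}\,W^{j-1}$, so the top term cannot cancel --- exactly what the choice $W=1+\sum_iA_i^2$ is for. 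Your extraction of $B\mid C$ (from $B^2\mid C^2$) and of $D>0$ is also sound, though the cleanest phrasing of the latter is that $2D-1$ is odd hence nonzero, the bracket $T=C^2+W^q+\sum_i\varepsilon_ia_iW^{i-1}$ satisfies $T\ge C^2+1>0$ by the same tail bound, and $B^2n+C^2\ge 0$ then forces $2D-1>0$ since $2D-1=0$ is impossible. Finally, the paper's ``$W_q$'' is a typo for $W^q$, which you implicitly and correctly corrected; what remains unwritten in your proposal is only routine bookkeeping.
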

We also need a result on the solutions to the Pell equation
\begin{equation} \label{pell}
x^2 - (a^2-1)y^2 = 1.
\end{equation}
For $a > 0$, we define the pair $<\chi_{a}(n),\psi_{a}(n)>$ as the $n$-th nonnegative solution of \eqref{pell}. Theorem 4 of Matiyasevich and Robinson \cite{mat-rob1975} proves the following system of Diophantine conditions:
\begin{theorem} \label{l2}
For $A > 1$, $B > 0$, and $C > 0$, $C = \psi_{A}(B)$ if and only if the following system of conditions is satisfied:
\begin{equation*}
\begin{split}
& DFI = \Box, \mbox{            } F \mid H-C, \mbox{            } B \le C, \\
& D = (A^2 - 1)C^2 + 1, \\
& E = 2(i + 1)DC^2, \\
& F = (A^2 - 1)E^2 + 1, \\
& G = A + F(F-A), \\
& H = B + 2jC, \\
& I = (G^2 - 1)H^2 + 1.
\end{split}
\end{equation*}
\end{theorem}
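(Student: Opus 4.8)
The statement is Theorem~4 of Matiyasevich and Robinson \cite{mat-rob1975}, and the plan is to derive it from the standard arithmetic of the Pell equation \eqref{pell}. First I would record the facts about the two solution sequences $\chi_a,\psi_a$ that drive everything: the identity $\chi_a(n)^2-(a^2-1)\psi_a(n)^2=1$; strict monotonicity together with $\psi_a(n)\ge n$; the \emph{square test}, that for $a>1$ and $v>0$ the number $(a^2-1)v^2+1$ is a perfect square iff $v=\psi_a(k)$ for some $k$ (the nonnegative solutions of \eqref{pell} being exactly the $\tuple{\chi_a(k),\psi_a(k)}$); the divisibility laws $\psi_a(k)\mid\psi_a(n)\iff k\mid n$ and $\psi_a(k)^2\mid\psi_a(n)\iff k\psi_a(k)\mid n$; the step-down congruence $\psi_a(n)\equiv n\pmod{a-1}$; the base-change congruence $a\equiv b\pmod c\Rightarrow\psi_a(n)\equiv\psi_b(n)\pmod c$; and the symmetry of residues modulo $\chi_a(m)$, namely $\psi_a(m+t)\equiv\psi_a(m-t)$ and $\psi_a(2m+t)\equiv-\psi_a(t)$, so that $\psi_a(n)\equiv\psi_a(k)\pmod{\chi_a(m)}$ forces $n\equiv\pm k\pmod{2m}$ once the indices are bounded.

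Second — and this is the device that lets the single condition $DFI=\Box$ do the work of three separate square conditions — I would show that $D,F,I$ are pairwise coprime, whence $DFI=\Box$ is equivalent to $D=\Box$, $F=\Box$, and $I=\Box$ holding simultaneously. The definitions are rigged for exactly this: since $D\mid E$ we get $F=(A^2-1)E^2+1\equiv1\pmod D$, so $\gcd(D,F)=1$; since $F\equiv1\pmod D$ the definition $G=A+F(F-A)$ gives $G\equiv1\pmod D$, whence $I\equiv1\pmod D$ and $\gcd(D,I)=1$; and since $G\equiv A\pmod F$ together with $F\mid H-C$ give $I\equiv(A^2-1)C^2+1=D\pmod F$, so $\gcd(F,I)=\gcd(D,F)=1$. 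This reduction is what keeps the representation cheap, because the three square conditions can afterward be handed as a single block to the relation-combining Theorem~\ref{l1}.

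Third, for the \emph{forward} direction I would assume $C=\psi_A(B)$ and produce the witnesses $i,j$. Then $D=(A^2-1)C^2+1=\chi_A(B)^2$ is a square, and $B\le C$ follows from $\psi_A(B)\ge B$. I would choose the auxiliary index $m$ to be a large multiple of $B\psi_A(B)$ lying in a suitable residue class, so that on the one hand $2DC^2=2\chi_A(B)^2\psi_A(B)^2$ divides $\psi_A(m)$ (possible by the divisibility law), letting me set $i=\psi_A(m)/(2DC^2)-1$ so that $E=\psi_A(m)$ and $F=\chi_A(m)^2=\Box$, and on the other hand $2C\mid G-1$. Taking $H=\psi_G(B)$ then gives $I=\chi_G(B)^2=\Box$; the base-change congruence yields $H=\psi_G(B)\equiv\psi_A(B)=C\pmod F$, i.e. $F\mid H-C$; and the step-down congruence $\psi_G(B)\equiv B\pmod{G-1}$ with $2C\mid G-1$ forces $2C\mid H-B$, so $j=(H-B)/(2C)$ is a nonnegative integer and $H=B+2jC$ as required.

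Finally, the \emph{reverse} direction is where the real work lies. Assuming the system, the coprimality reduction yields $D=F=I=\Box$, so the square test provides $k,m,\ell$ with $C=\psi_A(k)$ (and $k\ge1$ since $C>0$), $E=\psi_A(m)$, and $H=\psi_G(\ell)$. From $E=2(i+1)DC^2$ we read off $\psi_A(k)^2=C^2\mid\psi_A(m)$, so the strong divisibility law gives $k\psi_A(k)\mid m$, making $m$ enormous compared with $k$. Now $F\mid G-A$ gives $\psi_G(\ell)\equiv\psi_A(\ell)\pmod F$, while $F\mid H-C$ gives $\psi_A(\ell)\equiv H\equiv C=\psi_A(k)\pmod F$, that is, $\psi_A(\ell)\equiv\psi_A(k)\pmod{\chi_A(m)^2}$. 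The crux is to turn this into $k=B$: reducing first modulo $\chi_A(m)$ gives $\ell\equiv\pm k\pmod{2m}$, and then the finer congruence modulo $\chi_A(m)^2$ together with the step-down relation $H=\psi_G(\ell)\equiv\ell\pmod{G-1}$, the shape $H=B+2jC$, and the bound $B\le C$ pins the index down and forces $k=B$, hence $C=\psi_A(B)$. I expect this last step to be the main obstacle: the ambiguous sign $\pm$ and the $2m$-periodicity must be eliminated by the size inequalities deliberately engineered into the definitions (the factor $2(i+1)$ forcing $m$ large, and $B\le C$), and verifying that these inequalities are tight enough to exclude spurious indices is the delicate heart of the argument.
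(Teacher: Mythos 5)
This theorem is stated in the paper without proof --- it is quoted verbatim as Theorem 4 of Matiyasevich and Robinson \cite{mat-rob1975} --- and your reconstruction is correct and follows essentially the same argument as that original source: pairwise coprimality of $D,F,I$ to split $DFI=\Box$ into three separate square conditions, the square test to extract indices with $C=\psi_A(k)$, $E=\psi_A(m)$, $H=\psi_G(\ell)$, and the congruence-plus-size analysis ($\ell\equiv\pm k\pmod{2m}$, $2C\mid G-1$, $H=B+2jC$, $B\le C$, $k\le C$) that forces $k=B$. Two harmless imprecisions only: the congruence modulo $\chi_A(m)$ already suffices for the final step (the finer modulus $\chi_A(m)^2$ is never needed, since $B\equiv\pm k\pmod{2C}$ with $0<B,k\le C$ forces $B=k$ in both sign cases), and in the forward direction the existence of $m$ with $2DC^2\mid\psi_A(m)$ follows from periodicity of $n\mapsto\psi_A(n)$ modulo $2DC^2$ together with $\psi_A(0)=0$, rather than from the divisibility law you cite, which only controls the factor $C^2$.
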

From Theorems ~\ref{l1} and ~\ref{l2}, we are led to the following result:
\begin{theorem} \label{t2}
The relation $y = \binom{n}{s}$, where $n \ge s > 0$, holds if and only if
\begin{equation*}
F(y, n, s, x, w, k, l, m, i, j, v_1, v_2, v_3) = 0
\end{equation*}
has a solution in the parameters $y, n$, and $s$ and the variables $x, w, k, l, m, i, j, v_1, v_2$, and $v_3$.
\end{theorem}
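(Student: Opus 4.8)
The plan is to realize $y=\binom{n}{s}$ by reading $\binom{n}{s}$ off as a single digit in the base-$q$ expansion of $(q+1)^{n}$, to express the exponentials that this requires through the Pell function $\psi_{A}$ by means of Theorem~\ref{l2}, and finally to compress the resulting bundle of square, divisibility, and inequality conditions into one polynomial equation by the relation-combining theorem (Theorem~\ref{l1}). The ten auxiliary unknowns $x,w,k,l,m,i,j,v_1,v_2,v_3$ are then accounted for by this construction.

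First I would set up the digit extraction. From the binomial theorem $(q+1)^{n}=\sum_{t=0}^{n}\binom{n}{t}q^{t}$, so whenever the base $q$ exceeds every coefficient $\binom{n}{t}$ --- for instance once $q\ge 2^{n}$, since $\binom{n}{t}<2^{n}$ for every $t$ --- the number $\binom{n}{s}$ is exactly the $s$-th base-$q$ digit of $(q+1)^{n}$. Equivalently, $y=\binom{n}{s}$ holds (for $n\ge s>0$) if and only if there exist $x,w$ with
\[
(q+1)^{n}=x\,q^{s+1}+y\,q^{s}+w,\qquad w<q^{s},\qquad y<q,
\]
together with the bound $q\ge 2^{n}$ guaranteeing that no digit overflows. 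Thus the whole relation is reduced to a finite conjunction of polynomial identities, two strict inequalities, and a few exponential relations, namely the values $2^{n}$, $(q+1)^{n}$, and $q^{s}$ (with $q^{s+1}=q\cdot q^{s}$).

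Next I would eliminate the exponentials in favour of Pell data. Each power is captured by the Matiyasevich--Robinson mechanism underlying Theorem~\ref{l2}: writing $\chi_{A},\psi_{A}$ for the solutions of \eqref{pell}, one has congruences of Matiyasevich--Robinson type expressing a power as a residue of $\chi_{A}$ (or $\psi_{A}$) modulo an appropriate Pell modulus, so that pinning down the relevant value $\psi_{A}(B)$ forces the desired power. The crucial economy is that the system of Theorem~\ref{l2} is already presented purely as one square condition $DFI=\Box$, one divisibility $F\mid H-C$, one inequality $B\le C$, and a chain of polynomial definitions of $D,E,F,G,H,I$; its conditions are therefore already of the exact shape demanded by Theorem~\ref{l1}. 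The witnesses $i,j$ appearing in $E=2(i+1)DC^{2}$ and $H=B+2jC$, together with the auxiliary $k,l,m$ carrying the base and exponent data, serve as the existential unknowns of these Pell systems, which is what keeps the new-variable count down to the listed budget rather than forcing a self-contained copy of exponentiation for each power.

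The final step is consolidation. After the previous step ``$y=\binom{n}{s}$'' has become a conjunction of conditions of exactly the three types admitted by Theorem~\ref{l1}: several conditions of the form $A_i=\Box$, several divisibilities $B\mid C$, and several positivities $D>0$ (the strict inequalities $w<q^{s}$, $y<q$, and $B\le C$ each rewritten as positivity of a difference). A single application of the relation-combining theorem then collapses this conjunction into one equation $M_{q}=0$, whose extra unknown is supplied by one of the $v_{i}$; rearranging the unknowns as $x,w,k,l,m,i,j,v_1,v_2,v_3$ yields the desired $13$-argument polynomial $F$. I expect the genuine difficulty to lie here rather than in any isolated identity: the elimination is essentially forced once the digit extraction and the Pell substitutions are in place, but meeting the precise bound of ten auxiliary variables requires aggressive reuse of the witnesses $i,j$ and of the combining variables $v_1,v_2,v_3$ across the several square and divisibility conditions, and one must verify that a single pass of Theorem~\ref{l1} absorbs all of them without spawning additional existential witnesses.
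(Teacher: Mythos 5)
Your overall toolkit matches the paper's --- binomial-theorem digit extraction, the Pell machinery of Theorem~\ref{l2}, and a final pass of the relation-combining theorem --- but the specific reduction you set up cannot be carried out within the ten-unknown budget, and the step where you wave at this (``aggressive reuse of the witnesses $i,j$'') is exactly where the argument breaks. Your digit extraction $(q+1)^{n}=x\,q^{s+1}+y\,q^{s}+w$, $w<q^{s}$, $y<q$, $q\ge 2^{n}$ involves \emph{three} independent exponential quantities: $(q+1)^{n}$, $q^{s}$ (hence $q^{s+1}$), and $2^{n}$. Each of these needs its own Diophantine definition: Theorem~\ref{l2} pins down one value $\psi_{A}(B)$ per application, and each application carries its own chain $D,E,F,G,H,I$ and its own witnesses $i,j$; there is no mechanism by which the witnesses certifying one Pell condition can double as witnesses for a second one with unrelated $A',B'$. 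The relation-combining theorem is of no help here: it merges conditions that are \emph{already} in the form of squares, divisibilities, and positivities into a single equation at the cost of one new unknown, but it never eliminates the existential witnesses occurring inside those conditions. Run honestly, your plan lands at roughly the classical variable counts for the binomial coefficient ($24$--$29$ variables, the figures quoted in \S 2 of the paper), not $10$.

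The idea you are missing is the paper's use of the \emph{partial binomial expansion}: instead of isolating the $s$-th digit of $(x+1)^{n}$ between $x^{s}$ and $x^{s+1}$, the paper writes $y=\operatorname{rem}\bigl(\bigl[\frac{(x+1)^n}{x^s}\bigr],x\bigr)$ with $\bigl[\frac{(x+1)^n}{x^s}\bigr]=\sum_{i=0}^{n-s}\binom{n}{s+i}x^{i}$, together with a polynomial lower bound on $x$, and then invokes Theorem 8 of Matiyasevich--Robinson \cite{mat-rob1975}, which defines precisely this truncated quotient by a system costing only three extra unknowns: a single Pell condition $C=\psi_{A}(B)$ with $A=M(x+1)$, $B=n+1$, plus two auxiliary square conditions in $K$ and $L$, whose existential unknowns are $k,l$, and $m$ (the latter enforcing $B\le C$). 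This one packaged result removes any need to represent $(x+1)^{n}$, $x^{s}$, or $2^{n}$ separately; the remainder condition costs $v_{1},v_{2}$, the Pell system of Theorem~\ref{l2} costs $i,j$, and one application of Theorem~\ref{l1} costs $v_{3}$, which is exactly how the list $x,w,k,l,m,i,j,v_{1},v_{2},v_{3}$ is met. Without that lemma (or some equivalent device that expresses the needed exponential data in a single Pell system), your construction is correct as a Diophantine representation but does not prove the theorem as stated.
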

\begin{proof}
It is easy to see that
\begin{equation} \label{8}
\begin{split}
& y = \binom{n}{s} \\
& \Leftrightarrow \exists x\Big[y = \operatorname{rem}\Big(\Big[\frac{(x+1)^n}{x^s}\Bigr],x\Bigr) \land x > 4n^s\Bigr]\\
& \Leftrightarrow \exists xw\Big[y = \operatorname{rem}(w+1,x) \land w+1 = \Big[\frac{(x+1)^n}{x^s}\Bigr]  \land x - 4n^s > 0\Bigr],
\end{split}
\end{equation}
where the partial binomial expansion $\Big[\frac{(x+1)^n}{x^s}\Bigr]$ is defined as
\begin{equation*}
\Big[\frac{(x+1)^n}{x^s}\Bigr] = \sum_{i = 0}^{n-s}\binom{n}{s+i}x^i,
\end{equation*}
for $n > 0$, $s > 0$, and $x > n^s$. Note that the reason we take $w + 1$ as opposed to just $w$ in \eqref{8} is to ensure that $w + 1 > 0$. By Theorem 8 of \cite{mat-rob1975}, $w+1 = \Big[\frac{(x+1)^n}{x^s}\Bigr]$ can be expressed as a system of Diophantine conditions in three variables, so \eqref{8} becomes the following system of equations
\begin{equation*}
\begin{split}
& y = \operatorname{rem}(w+1,x), \\
& C = \psi_A(B), \\
& (M^2 - 1)K^2 + 1 = \Box, \\
& (M^2x^2 - 1)L^2 + 1 = \Box, \\
& (x - 4n^s)(K^2L^2 - 4(C-KL(w+1))^2) > 0, \\
& M = 8n(x+w+1)+2, \\
& K = n - s + 1 + k(M-1), \\
& L = s + 1 + l(Mx-1), \\
& A = M(x+1), \\
& B = n + 1, \\
& C = m + B
\end{split}
\end{equation*}
Note that
\begin{equation*}
\begin{split}
& y = \operatorname{rem}(w+1,x) \\
& \Leftrightarrow (y+v_1+1-x)^2 + (xv_2-w-1+y)^2 = 0.
\end{split}
\end{equation*}
Moreover, since $C \ge B$, the remaining conditions can be combined via Theorem ~\ref{l1} as
\begin{equation*}
M_3(DFI, (M^2-1)K^2 + 1, (M^2x^2-1)L^2 + 1, F, H-C, (x - 4n^s)(K^2L^2 - 4(C-KL(w+1))^2), v_3) = 0.
\end{equation*}
Since we can eliminate $D, F, I, H, M, K, L, A, B,$ and $C$, we get that
\begin{equation*}
\begin{split}
& F(y, n, s, x, w, k, l, m, i, j, v_1, v_2, v_3) \\
& = ((y+v_1+1-x)^2 + (xv_2-w-1+y)^2)^2 \\
& \hspace{0.15in}+ M_3^2(DFI, (M^2-1)K^2 + 1, (M^2x^2-1)L^2 + 1, F, H-C, (x - 4n^s)(K^2L^2 - 4(C-KL(w+1))^2), v_3).
\end{split}
\end{equation*}
\end{proof}
The explicit representation of $F(y, n, s, x, w, k, l, m, i, j, v_1, v_2, v_3)$ is rather unruly and would cover several pages. Instead, we keep $D, F, I, K, L, M$ as variables and introduce two more, $W$ and $J$,  where $W = 1+(DFI)^2+((M^2 - 1)K^2 + 1)^2 + ( (M^2x^2 - 1)L^2 + 1)^2$ and $J = (x - 4n^s)(K^2L^2 - 4(C-KL(w+1))^2)$. Thus, we combine the additional equations via the summing of squares technique, and the following explicit representation of $F_1(y, n, s, x, w, k, l, m, i, j, v_1, v_2, v_3, D, F, I, J, K, L, M, W)$ is obtained, where $y, n$, and $s$ are parameters:
\begin{equation*}
\begin{split}
&((y+v_1+1-x)^2 + (xv_2-w-1+y)^2)^2 + ((D-((M(x+1))^2-1)(m+n+1)^2 - 1)^2 + (F-4((M(x+1))^2-1)\\
&(i+1)^2(((M(x+1))^2-1)(m+n+1)^2+1)^2(m+n+1)^4-1)^2 + (I-((M(x+1)+(4((M(x+1))^2-1)(i+1)^2\\
&(m+n+1)^4(((M(x+1))^2-1)(m+n+1)^2+1)^2)(4((M(x+1))^2-1)(i+1)^2(m+n+1)^4(((M(x+1))^2-1)\\
&(m+n+1)^2+1)^2-M(x+1)+1))^2-1)(n+1+2j(m+n+1))^2 - 1)^2 + (M-8n(x+w+1)-2)^2 + (K-n+s-1\\
&-k(M-1))^2 + (L-s-1-l(Mx-1))^2 + (W-1-(DFI)^2-((M^2-1)K^2+1)^2-((M^2x^2-1)L^2+1)^2)^2 \\
&+ (J - (x-4n^s)(K^2L^2-4(m+n+1-KL(w+1))^2))^2)^2 + ((F^2v_3 + (n+1+(m+n+1)(2j-1))^2 - F^2(2J-1)\\
&((n+1+(m+n+1)(2j-1))^2 + W^3 + (DFI)^{1/2} + ((M^2-1)K^2+1)^{1/2}W + ((M^2x^2-1)L^2+1)^{1/2}W^2))(F^2v_3 \\
&+ (n+1+(m+n+1)(2j-1))^2 - F^2(2J-1)((n+1+(m+n+1)(2j-1))^2 + W^3 + (DFI)^{1/2} - ((M^2-1)K^2+1)^{1/2}\\
& W + ((M^2x^2-1)L^2+1)^{1/2}W^2))(F^2v_3 + (n+1+(m+n+1)(2j-1))^2 - F^2(2J-1)((n+1+(m+n+1)(2j-1))^2 \\
& + W^3 + (DFI)^{1/2} - ((M^2-1)K^2+1)^{1/2}W - ((M^2x^2-1)L^2+1)^{1/2}W^2))(F^2v_3 + (n+1+(m+n+1)(2j-1))^2 \\
&- F^2(2J-1)((n+1+(m+n+1)(2j-1))^2 + W^3 - (DFI)^{1/2} - ((M^2-1)K^2+1)^{1/2}W - ((M^2x^2-1)L^2+1)^{1/2}W^2))\\
&(F^2v_3 + (n+1+(m+n+1)(2j-1))^2 - F^2(2J-1)((n+1+(m+n+1)(2j-1))^2 + W^3 - (DFI)^{1/2} - ((M^2-1)K^2\\
&+1)^{1/2}W + ((M^2x^2-1)L^2+1)^{1/2}W^2))(F^2v_3 + (n+1+(m+n+1)(2j-1))^2 - F^2(2J-1)((n+1+(m+n+1)\\
&(2j-1))^2 + W^3 - (DFI)^{1/2} + ((M^2-1)K^2+1)^{1/2}W + ((M^2x^2-1)L^2+1)^{1/2}W^2))(F^2v_3 + (n+1+(m+n+1)\\
&(2j-1))^2 - F^2(2J-1)((n+1+(m+n+1)(2j-1))^2 + W^3 - (DFI)^{1/2} + ((M^2-1)K^2+1)^{1/2}W \\
&- ((M^2x^2-1)L^2+1)^{1/2}W^2))(F^2v_3 + (n+1+(m+n+1)(2j-1))^2 - F^2(2J-1)((n+1+(m+n+1)(2j-1))^2 \\
&+ W^3 + (DFI)^{1/2} + ((M^2-1)K^2+1)^{1/2}W - ((M^2x^2-1)L^2+1)^{1/2}W^2)))^2.
\end{split}
\end{equation*}
\indent Now, if one would prefer to have a writeable exponential Diophantine equation from the transformation of a formula with bounded universal quantifier(s) (since for certain problems obtaining a writeable exponential Diophantine representation is more feasible than obtaining a writeable Diophantine representation, as will be the case in \S 3), then even the representation of the binomial coefficient in 18 variables is too large, since one can use the usual exponential Diophantine representations of these in 5 variables and 10 variables, respectively (if one uses the exponential Diophantine representations presented in \cite[Chapter 3.4]{mat-book}). However, the problem one still faces is the vast number of variables introduced by \eqref{eq3}. The relation of divisibility requires the introduction of one new variable, and each binomial coefficient (again, I am excluding $\binom{q}{b}$ from this) requires the introduction of six new variables. Thus, $7m$ new variables are needed in the exponential Diophantine representation of \eqref{eq3}. For example, in \S 3, we will be dealing with $m = 24$ and $m = 31$, which would mean for each case 168 and 217 variables are introduced, respectively. \newline
\indent Therefore, a method of reducing the number of variables involved in \eqref{eq3} would do us well. More specifically, the goal is to reduce the number of variables in
\begin{equation} \label{s1}
y_1 \mid \binom{z_l}{w},
\end{equation}
where $y_1 = \binom{q}{b}$ and $l = 1, \ldots, m$. \newline
\indent Inspired by some tricks used in the proof of the result by Matiyasevich \cite{mat79} that every Diophantine set has an exponential Diophantine representation with only three unknowns, the following result is obtained:
\begin{theorem} \label{t3}
\begin{equation} \label{s2}
\begin{split}
& y_1 \mid \binom{z_l}{w} \\
& \Leftrightarrow \exists pq \big[\left(({y_1}+1)2^{z_l} + 1\right)^{z_l} = p(({y_1}+1)2^{z_l})^{w} + q \land \left(({y_1}+1)2^{z_l}\right)^w > q \land y_1 \mid p\bigr].
\end{split}
\end{equation}
Hence, since the relations $\left(({y_1}+1)2^{z_l}\right)^w > q$ and $ y_1 \mid p$ introduce 2 more variables, we have a total of 4 new variables introduced by \eqref{s2}.
\end{theorem}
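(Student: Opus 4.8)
The plan is to read off $\binom{z_l}{w}$ as a base-$N$ digit of a suitable power, exactly as in the proof of Theorem~\ref{t2}, and then to transfer the divisibility condition through that reading. Throughout I write $N = (y_1+1)2^{z_l}$, so that the left-hand side of the displayed equation is $(N+1)^{z_l}$. The binomial theorem gives
\begin{equation*}
(N+1)^{z_l} = \sum_{i=0}^{z_l}\binom{z_l}{i}N^i,
\end{equation*}
and the idea is that the two arithmetic conditions on the right of \eqref{s2} simply split this expansion at the $N^w$ place: $q$ collects the low-order terms and $p$ the high-order terms, whose lowest digit is precisely $\binom{z_l}{w}$.

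First I would record the size bound $\binom{z_l}{i} \le 2^{z_l} < N$ for every $i$, valid since $y_1 \ge 1$; this is what makes the $\binom{z_l}{i}$ genuine digits in base $N$. In particular $\sum_{i=0}^{w-1}\binom{z_l}{i}N^i < N^w$, so the pair of conditions $(N+1)^{z_l} = pN^w + q$ and $N^w > q$ is exactly Euclidean division of $(N+1)^{z_l}$ by $N^w$. By uniqueness of quotient and remainder they force
\begin{equation*}
q = \sum_{i=0}^{w-1}\binom{z_l}{i}N^i, \qquad p = \sum_{i=w}^{z_l}\binom{z_l}{i}N^{i-w} = \binom{z_l}{w} + N\sum_{i=w+1}^{z_l}\binom{z_l}{i}N^{i-w-1}.
\end{equation*}
This identity is the common engine for both directions: given $y_1 \mid \binom{z_l}{w}$ I would \emph{define} $p,q$ by this division (so the first two conditions hold automatically), while given a witness $(p,q)$ the inequality pins $p$ down to this same value.

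The main obstacle, and the step I would scrutinize hardest, is the equivalence $y_1 \mid p \Leftrightarrow y_1 \mid \binom{z_l}{w}$. From the displayed identity $p = \binom{z_l}{w} + N\,p'$, where $p' = \sum_{i\ge w+1}\binom{z_l}{i}N^{i-w-1}$, this equivalence holds \emph{iff} $y_1 \mid N\,p'$, and to secure it for all admissible $p'$ one needs $y_1 \mid N$. Hence the linchpin is that the base reduce to $0$ modulo $y_1$: the role of the $2^{z_l}$ factor is only to force $N$ above every binomial coefficient (the digit bound above), whereas the transfer of divisibility from the single extracted digit to the whole high part needs the remaining factor to carry $y_1$. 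I would therefore verify with care that $N \equiv 0 \pmod{y_1}$, adjusting the base to a multiple of $y_1$ exceeding $2^{z_l}$ if the written form did not already reduce this way. Once that reduction is in hand, $p \equiv \binom{z_l}{w} \pmod{y_1}$, both implications close, and the encoding costs only the four new variables $p,q$ together with the slack variable witnessing $N^w > q$ and the cofactor witnessing $y_1 \mid p$, as claimed.
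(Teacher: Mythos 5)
Your decomposition is the same one the paper uses: Euclidean division of $(N+1)^{z_l}$ by $N^w$, where $N=(y_1+1)2^{z_l}$, pins down $q$ as the low-order block and $p$ as the high-order block whose last base-$N$ digit is $\binom{z_l}{w}$, and divisibility is then transferred through the base. The paper carries this out for a general base $u$ under the three side conditions $u\ge 2^{z_l}$, $y_1\mid u$, and $y_1\mid p$, and then asserts that the single substitution $u=(y_1+1)2^{z_l}$ (its equation \eqref{s5}) ``eliminates $u$ and $y_1\mid u$.''

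The step you singled out for scrutiny is exactly where that assertion fails, and your caution is vindicated: $(y_1+1)2^{z_l}\equiv 2^{z_l}\pmod{y_1}$, so the printed base is a multiple of $y_1$ only in the degenerate case $y_1\mid 2^{z_l}$, and the adjustment you anticipated is genuinely necessary. As printed, \eqref{s2} is false in both directions. For $y_1=3$, $z_l=2$, $w=1$, the two division conditions force $p=18$, $q=1$ (as $17^2=289=18\cdot 16+1$), and $3\mid 18$ although $3\nmid\binom{2}{1}=2$. For $y_1=3$, $z_l=3$, $w=1$, they force $p=1123$, $q=1$ (as $33^3=35937=1123\cdot 32+1$), and $3\nmid 1123$ although $3\mid\binom{3}{1}=3$. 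So the paper's argument is sound for a general base $u$ satisfying $y_1\mid u$, but its final elimination step is an error, not an economy. Your repair is the right one: take the base to be a positive multiple of $y_1$ that still dominates $2^{z_l}$, e.g.\ $u=y_1(y_1+1)2^{z_l}$ (note that $y_1=\binom{q}{z+1}\ge 1$ in the application of \S 3); then $p\equiv\binom{z_l}{w}\pmod{y_1}$ holds, both implications close, and the count of four new variables is preserved. The same correction must then be propagated to \eqref{e10} and to the final $138$-variable representation, which use the printed form of \eqref{s2} verbatim.
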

\begin{proof}
Observe that by the binomial theorem,
\begin{equation} \label{s3}
(u+1)^{z_l} = pu^w + q,
\end{equation}
where
\begin{equation*}
\begin{split}
& p = \sum_{i = w}^{z_l}\binom{z_l}{i}u^{i - w}, \\
& q = \sum_{i = 0}^{w-1}\binom{z_l}{i}u^{i}.
\end{split}
\end{equation*}
If $u$ is large enough, for instance, if
\begin{equation*}
u \ge 2^{z_l},
\end{equation*}
then
\begin{equation} \label{s4}
\begin{split}
u^w & \ge u^{w-1}2^{z_l} \\
& = \sum_{i = 0}^{z_l}\binom{z_l}{i}u^{w-1} \\
& > q.
\end{split}
\end{equation}
Note that $p$ and $q$ are uniquely determined by \eqref{s3} and \eqref{s4}. \newline
\indent It is easy to see that since
\begin{equation*}
p = \binom{z_l}{w} + \sum_{i = w+1}^{z_l}\binom{z_l}{i} u^{i-w},
\end{equation*}
then
\begin{equation*}
p \equiv \binom{z_l}{w} \mod{u}.
\end{equation*}
So if $y_1 \mid u$, then the condition in \eqref{s1} is equivalent to
\begin{equation*}
y_1 \mid p.
\end{equation*}
Thus,
\begin{equation*}
\begin{split}
& y_1 \mid \binom{z_l}{w} \\
& \Leftrightarrow \exists pqu [(u+1)^{z_l} = pu^w + q \land u \ge 2^{z_l} \land u^w > q \land y_1 \mid u \land y_1 \mid p].
\end{split}
\end{equation*}
The conditions $u \ge 2^{z_l}$, $u^w > q$, $y_1 \mid y$, and $y_1 \mid p$ introduce 4 more variables, bringing the total again, to 7 new variables. However, we can reduce the total number of new variables introduced, namely by eliminating $u$ and $y_1 \mid u$ by using the equation
\begin{equation} \label{s5}
u = ({y_1}+1)2^{z_l}.
\end{equation}
Thus, we get our result.
\end{proof}
So, the representation of \eqref{s1} has been reduced from introducing 7 new variables, to introducing only 4 new variables. Hence, already for $m \ge 7$, we see that the number of variables conserved by \eqref{s2} supercedes the number of variables conserved by the strong inequality in \eqref{save2}.

\section{Exponential prefixed polynomial representations of independent statements}
\subsection{Proof of Theorem ~\ref{result1}}
Bovykin and De Smet's \cite{bovykin} intermediate representation of Theorem ~\ref{ph2} in prefixed polynomial form (involving alternations of existential and universal quantifiers) is as follows:
\begin{equation} \label{e1}
\begin{split}
&\forall k \mbox{            } \exists M \mbox{            } \forall ab \mbox{            } \exists cdAX \mbox{            } \forall xy \mbox{            } \exists BCF \\
& [( 0 < x \land x < y \land y \le A + k - 1) \to \\
& (A = \operatorname{rem}(c, d + 1) \land B = \operatorname{rem}(c, dx + 1) \land C = \operatorname{rem}(c, dy + 1) \land B < C \\
& \land C < M + 1 \land F = \operatorname{rem}(a, b(B + C^2) + 1) \land F \equiv X\mod{r})].
\end{split}
\end{equation}
Note that eliminating the bounded quantifiers from the intermediate representation of Theorem ~\ref{ph2} rather than directly from the full prefixed polynomial in \eqref{ppr1} results in smaller exponential Diophantine representation for PH$^2$. Our notation is pretty similar to \cite{bovykin}, but to clarify, $(a, b)$ codes $f$ and $(c,d)$ codes the homogeneous set $Y$ (and $x$ and $y$ are indices of elements coded by $(c,d)$). Moreover, $A$ is the first element of $Y$, namely $\operatorname{min}(Y)$, since $Y$ is ordered.\newline
\indent Note that we will be taking $k, M, a, b$, and $r$ as parameters (thus, the $\forall k$ and $\forall ab$ in \eqref{e1} pose no difficulty). Further, note that since $x$ is a natural number $x \ge 0$, but in \eqref{e1}, $x > 0$. To fix this, we simply have to modify $B$ and $C$ to $B = \operatorname{rem}(c, d(x + 1) + 1)$ and $C = \operatorname{rem}(c, d(y + 1) + 1)$.  Second, since we have updated $x$ and $y$ to be $x+1$ and $y+1$, respectively, and $x < y$, then $\forall xy \le A + k - 1$ is incorrect. Instead, we bound $x$ and $y$ as $\forall x \le A + k - 3 \mbox{         } \forall y \le A + k - 2$. Hence, the representation we are left to deal with is:
\begin{equation} \label{e3}
\begin{split}
&\exists cdAX \mbox{            } \forall x \le A + k - 3 \mbox{              } \forall y \le A + k - 2 \mbox{            } \exists BCF \\
& [x < y \land A = \operatorname{rem}(c, d + 1) \land B = \operatorname{rem}(c, d(x+1) + 1) \land C = \operatorname{rem}(c, d(y+1) + 1) \land B < C \\
& \land C < M + 1 \land F = \operatorname{rem}(a, b(B + C^2) + 1) \land F \equiv X\mod{r}].
\end{split}
\end{equation}
Expanding \eqref{e3}, we have
\begin{equation} \label{e4}
\begin{split}
&\exists cdAX \mbox{            } \forall x \le A + k - 3 \mbox{              } \forall y \le A + k - 2 \mbox{            } \exists BCFv_1,\ldots,v_{17} \\
& [(x+v_1+1 - y)^2 + ((A+v_2 - d)^2 + ((d+1)v_3 - c + A)^2)^2 \\
&+ ((B+v_4-d(x+1))^2 + ((d(x+1)+1)v_5 - c + B)^2)^2 \\
&+ ((C+v_6-d(y+1))^2 + ((d(y+1)+1)v_7 - c + C)^2)^2 \\
&+ (B+v_8+1 - C)^2 + (C+v_9 - M)^2 \\
&+ ((F+v_{10}-b(B+C^2))^2 + ((b(B+C^2)+1)v_{11} - a + F)^2)^2 \\
&+ ((v_{12} + v_{14} + 1 - r)^2 + (rv_{15} - F + v_{12})^2)^2 \\
&+ ((v_{13} + v_{16} + 1 - r)^2 + (rv_{17} - X + v_{13})^2)^2 + (v_{12} - v_{13})^2 = 0]. 
\end{split}
\end{equation}
We can reduce the two bounded quantifiers in \eqref{e4} to just one by taking advantage of the fact that if $x \le A+ k - 3$ and $y \le A+k -2 $, then $J(x, y) \le J(A+k-3, A+k-2)$, where $J$ is Cantor's function defined for natural numbers $m$ and $n$ as $J(m,n) = \frac{1}{2}((m+n)^2 + 3m + n)$. Thus, we have
\begin{equation} \label{e6}
\begin{split}
& \exists cdAXz \mbox{            } \forall t < z + 1 \mbox{            } \exists xyBCFv_1,\ldots,v_{19} \\
& [(2z - (2A+2k-5)^2 - 4A - 4k + 11)^2 + (2t - (x+y)^2 - 3x - y)^2 \\
&+ ((A+ k - 2 + v_{18} - x) \cdot (A+k - 1 + v_{19} - y)\cdot((x+v_1+1 - y)^2 \\
&+ ((A+v_2 - d)^2 + ((d+1)v_3 - c + A)^2)^2 \\
&+ ((B+v_4-d(x+1))^2 + ((d(x+1)+1)v_5 - c + B)^2)^2 \\
&+ ((C+v_6-d(y+1))^2 + ((d(y+1)+1)v_7 - c + C)^2)^2 \\
&+ (B+v_8+1 - C)^2 + (C+v_9 - M)^2 \\
&+ ((F+v_{10}-b(B+C^2))^2 + ((b(B+C^2)+1)v_{11} - a + F)^2)^2 \\
&+ ((v_{12} + v_{14} + 1 - r)^2 + (rv_{15} - F + v_{12})^2)^2 \\
&+ ((v_{13} + v_{16} + 1 - r)^2 + (rv_{17} - X + v_{13})^2)^2 + (v_{12} - v_{13})^2))^2 = 0]. 
\end{split}
\end{equation}
The final removal of the remaining bounded universal quantifier $\forall t < z+1$ will be done explicitly by the methods presented in \S 2. Based on \eqref{e6}, we then define the following polynomial $P(k, M, a, b, r, c, d, A, X, z, z_0, z_1,\ldots,z_{24})$ as:
\begin{equation*}
\begin{split}
& (2z - (2A+2k-5)^2 - 4A - 4k + 11)^2 + (2z_0 - (z_1+z_2)^2 - 3z_1 - z_2)^2 \\
&+ ((A+ k - 2 + z_{23} - z_1) \cdot (A+k - 1 + z_{24} - z_2)\cdot((z_1+z_6+1 - z_2)^2 \\
&+ ((A+z_7 - d)^2 + ((d+1)z_8 - c + A)^2)^2 \\
&+ ((z_3+z_9-d(z_1+1))^2 + ((d(z_1+1)+1)z_{10} - c + z_3)^2)^2 \\
&+ ((z_4+z_{11}-d(z_2+1))^2 + ((d(z_2+1)+1)z_{12} - c + z_4)^2)^2 \\
&+ (z_3+z_{13}+1 - z_4)^2 + (z_4+z_{14} - M)^2 \\
&+ ((z_5+z_{15}-b(z_3+z_4^2))^2 + ((b(z_3+z_4^2)+1)z_{16} - a + z_5)^2)^2 \\
&+ ((z_{17} + z_{19} + 1 - r)^2 + (rz_{20} - z_5 + z_{17})^2)^2 \\
&+ ((z_{18} + z_{21} + 1 - r)^2 + (rz_{22} - X + z_{18})^2)^2 + (z_{17} - z_{18})^2))^2. 
\end{split}
\end{equation*}
Again based on \eqref{e6}, we define (with some simplifications) the polynomial $B(k, M, a, b, r, c, d, A, X, z, w)$ as:
\begin{equation*}
\begin{split}
&(1 + A + k + 2 w)^2 (2 + A + k + 2 w)^2 (4 w^2 + (M + 2 w)^2 + 2 (1 + 3 w)^2 \\
& + ((2 + r)^2 w^2 + (1 + r + 2 w)^2)^2 + ((A + d + w)^2 + (A + c + w + d w)^2)^2 \\
& + 2 ((d + 2 w + d w)^2 + (c + w (2 + d + d w))^2)^2 + (w^2 (2 + b + b w)^2 \\
& + (a + w (2 + b w (1 + w)))^2)^2 + ((1 + r + 2 w)^2 + (w + r w + X)^2)^2)^2 \\
& + 4 (1 + 2 w + 2 w^2 + z)^2 + (11 + 4 A + 4 k + (5 + 2 A + 2 k)^2 + 2 z)^2.
\end{split}
\end{equation*}
Thus, \eqref{e6} (and by consequence, \eqref{e3}) is equivalent to the following system of 27 Diophantine conditions solvable in the unknowns $c, d, A, X, z, q, w, z_0, \ldots, z_{24}$ (with $k, M, a, b, r$ as parameters):
\begin{equation} \label{e7}
\begin{split}
& P(k, M, a, b, r, c, d, A, X, z, z_0, z_1,\ldots,z_{24}) \equiv 0 \mod{\binom{q}{z+1}}, \\
& z_0 = q, \\
& q > z+1 + (z+2)^{z+2}((z+2)^{z+2}B(k, M, a, b, r, c, d, A, X, z, w))^{w^{24}},\\
& \binom{q}{z+1} \mid \binom{z_1}{w},\\
&\hspace{0.55 in}\vdots\\
& \binom{q}{z+1} \mid \binom{z_{24}}{w}.
\end{split}
\end{equation}
\indent We proceed as follows. Letting $l$ be a dummy variable used only for indexing, we introduce the variables $y_1, y_2, j_1,\ldots,j_{3}, f_1,\ldots,f_{24}, g_1,\ldots,g_{24}, m_1,\ldots,m_{24}, s_1,\ldots, s_{24}, h_1, \ldots, h_{5}$ to represent the following equivalences:
\begin{equation} \label{e10}
\begin{split}
& y_1 = \binom{q}{z+1} \\
& \Leftrightarrow (z+1 - h_1)^2 + \left((2^q + 2)^q - h_2(2^q + 1)^{z+2} - y_1(2^q + 1)^{h_3} - h_3\right)^2 \\
& + \left(y_1 - h_4 - 2^q\right)^2 + \left(h_3 + h_5 + 1 - (2^q + 1)^{h_1}\right)^2 = 0, \\
& P(k, M, a, b, r, c, d, A, X, z, z_0, \ldots z_{24}) \equiv 0 \mod{y_1} \\
& \Leftrightarrow (j_1 + j_2 + 1 - y_1)^2 + (y_1j_3 - P(k, M, a, b, r, c, d, A, X, z, z_0, \ldots z_{24}) + j_1)^2 + j_1^2 = 0, \\
& z_0 = q \\
& \Leftrightarrow z_0-q = 0, \\
& q > z+1 + (z+2)^{z+2}((z+2)^{z+2}B(k, M, a, b, r, c, d, A, X, z, w))^{w^{24}},\\
& \Leftrightarrow z + 2 + (z+2)^{z+2}((z+2)^{z+2}B(k, M, a, b, r, c, d, A, X, z, w))^{w^{24}} + y_2 - q = 0,\\
& \text{For $l = 1, \ldots, 24$}, \\
& y_1 \mid \binom{z_l}{w} \\
& \Leftrightarrow \left(\left(({y_1}+1)2^{z_l} + 1\right)^{z_l} - f_{l}(({y_1}+1)2^{z_l})^{w} - g_{l}\right)^2 \\
& + \left(g_{l} + m_{l} + 1 - \left(({y_1}+1)2^{z_l}\right)^w\right)^2 + \left(y_1s_{l} - f_{l}\right)^2 = 0.
\end{split}
\end{equation}
From \eqref{e10}, we derive our desired exponential Diophantine representation solvable in the 138 unknowns $c, d, A, X, z, q, w, z_0, z_1, \ldots, z_{24}, y_1, y_2, j_1,\ldots,j_{3}, f_1,\ldots,f_{24}, g_1,\ldots,g_{24}, m_1,\ldots,m_{24}, s_1,\ldots, s_{24}, h_1, \ldots, h_{5}$ (with $k, M, a, b, r$ as parameters):
\begin{equation*}
\begin{split}
& \big((z+1 - h_1)^2 + \left((2^q + 2)^q - h_2(2^q + 1)^{z+2} - y_1(2^q + 1)^{h_3} - h_3\right)^2 + \left(y_1 - h_4 - 2^q\right)^2 + \left(h_3 + h_5 + 1 - (2^q + 1)^{h_1}\right)^2\bigr)^2 \\
&+ \big((j_1 + j_2 + 1 - y_1)^2 + (y_1j_3 - ((2z - (2A+2k-5)^2 - 4A - 4k + 11)^2 + (2z_0 - (z_1+z_2)^2 - 3z_1 - z_2)^2 \\
&+ ((A+ k - 2 + z_{23} - z_1) \cdot (A+k - 1 + z_{24} - z_2)\cdot((z_1+z_6+1 - z_2)^2 + ((A+z_7 - d)^2 + ((d+1)z_8 - c + A)^2)^2 \\
& + ((z_3+z_9-d(z_1+1))^2 + ((d(z_1+1)+1)z_{10} - c + z_3)^2)^2 + ((z_4+z_{11}-d(z_2+1))^2 + ((d(z_2+1)+1)z_{12} - c + z_4)^2)^2 \\
&+ (z_3+z_{13}+1 - z_4)^2 + (z_4+z_{14} - M)^2 + ((z_5+z_{15}-b(z_3+z_4^2))^2 + ((b(z_3+z_4^2)+1)z_{16} - a + z_5)^2)^2 \\
&+ ((z_{17} + z_{19} + 1 - r)^2 + (rz_{20} - z_5 + z_{17})^2)^2 + ((z_{18} + z_{21} + 1 - r)^2 + (rz_{22} - X + z_{18})^2)^2 + (z_{17} - z_{18})^2))^2) + j_1)^2 + j_1^2\bigr)^2 \\
&+ (z_0-q)^2 + \big(z+2+(z+2)^{z+2}((z+2)^{z+2}((1 + A + k + 2 w)^2 (2 + A + k + 2 w)^2(4 w^2 + (M + 2 w)^2 + 2 (1 + 3 w)^2 \\
&+ ((2 + r)^2 w^2 + (1 + r + 2 w)^2)^2 + ((A + d + w)^2 + (A + c + w + d w)^2)^2 + 2 ((d + 2 w + d w)^2 + (c + w (2 + d + d w))^2)^2 \\
& + (w^2 (2 + b + b w)^2 + (a + w (2 + b w (1 + w)))^2)^2 + ((1 + r + 2 w)^2 + (w + r w + X)^2)^2)^2 + 4 (1 + 2 w + 2 w^2 + z)^2 \\
&+ (11 + 4 A + 4 k + (5 + 2 A + 2 k)^2 + 2 z)^2))^{w^{24}}+y_2-q\bigr)^2 + \sum_{l = 1}^{24}\big(\left(\left(({y_1}+1)2^{z_l} + 1\right)^{z_l} - f_{l}(({y_1}+1)2^{z_l})^{w} - g_{l}\right)^2  \\
&+ \left(g_{l} + m_{l} + 1 - \left(({y_1}+1)2^{z_l}\right)^w\right)^2 + \left(y_1s_{l} - f_{l}\right)^2\bigr)^2 = 0.
\end{split}
\end{equation*}
The use of summation notation in the above representation is permissible, and in fact is more informative than expanding it. For instance, Keijo Ruohonen \cite{rn}, uses summation notation in his 79 variable Diophantine representation of Fermat's Last Theorem in order to keep it under a page. In fact, Davis, Matiyasevich, and Robinson \cite[pg. 332]{dmr1976} acknowledge this as a writeable representation. But it is not a problem either if we were to expand the sum, as the entire representation would cover exactly half a page. \newline
\indent Now, in order to obtain a Diophantine representation of PH$^2$, it would not be sufficient to simply replace every exponential function with its Diophantine representation (say, for the sake of example, even a representation that is economical with respect to the number of variables, namely, five, as presented in \cite{mat-rob1975}), since this would result in a representation of 638 variables. Instead, if one uses the 10 variable Diophantine representation of the binomial coefficient obtained via Theorem ~\ref{t2} (and replace all the exponential functions of the sufficiently strong inequality for $q$ in \eqref{e10} with the five variable representation), then we will obtain a Diophantine representation of PH$^2$ in 347 variables (though if one is concerned with writeability and not just the theoretical minimization of variables, then the 18 variable version should be used instead). We should also mention that for the specific case of PH$^2$, the exponential Diophantine and Diophantine equations obtained could further be used via some combinatorial tricks in Bovykin and De Smet's original intermediate representation of PH$^2$ presented in \eqref{e1}, which Bovykin and De Smet are apt to point out. However, despite the naivete of the starting representation in \eqref{e1} prior to the elimination of the bounded universal quantifier, the fact that we have still obtained compact exponential Diophantine and Diophantine representations of PH$^2$ points to importance of the conservation of variables due to the techniques of \S 2. To compare, had we directly used the method of Davis, Putnam, and Robinson \cite{dpr} (namely, \eqref{eq2} and \eqref{eq3}) without any of the techniques in \S 2 to eliminate the bounded universal quantifier in \eqref{e6}, then a rough estimate would yield an exponential Diophantine representation of 233 variables and, under the assumption of a five variable representation of the exponential function, a 1055 variable Diophantine representation. Thus, we have managed to conserve 95 variables and 708 variables for each case, respectively.
\subsection{Proof of Theorem ~\ref{result2}}
Goodstein's Theorem \cite{goodstein} is the following:
\begin{theorem}[Goodstein's Theorem]
Given any non-decreasing function $p_r$, $p_0 \ge 2$, a number $n_0$, and the function $n_r$ defined as:
\begin{equation*}
n_{r+1} = S_{p_{r+1}}^{p_r}(n_r)-1,
\end{equation*}
then $\exists r. \mbox{         } n_r = 0$, where $S_{p_{r+1}}^{p_r}(n_r)$ is the operation of putting $n_r$ in hereditary base-$p_r$ notation and then replacing every occurence of $p_r$ in this representation with $p_{r+1}$.
\end{theorem}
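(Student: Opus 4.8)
The plan is to prove termination by exhibiting a strictly decreasing sequence of ordinals below $\varepsilon_0$, the classical argument for Goodstein's theorem, adapted to the merely non-decreasing base sequence $p_r$. First I would make precise the ordinal assignment. For a base $b \ge 2$ and a natural number $m$, write $m$ in \emph{hereditary} base-$b$ notation (write $m$ in base $b$, then recursively rewrite every exponent in hereditary base-$b$ notation), and let $o_b(m)$ denote the ordinal obtained by replacing every occurrence of the base symbol $b$ by $\omega$. Since all digits are $< b$, the resulting expression is a Cantor normal form, so $o_b(m) < \varepsilon_0$ for every $m$, and $o_b(0) = 0$.

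The proof then rests on two lemmas. The first is \emph{invariance under the base bump}: because $p_r$ is non-decreasing we have $p_{r+1} \ge p_r$, so every digit appearing in the hereditary base-$p_r$ expansion of $m$ (each $< p_r \le p_{r+1}$) is still a legal digit in base $p_{r+1}$; hence $S_{p_{r+1}}^{p_r}(m)$ is exactly the number whose hereditary base-$p_{r+1}$ expansion has the same shape, and replacing the base symbol by $\omega$ in either expansion yields the same ordinal. This gives
\begin{equation*}
o_{p_{r+1}}\!\left(S_{p_{r+1}}^{p_r}(m)\right) = o_{p_r}(m).
\end{equation*}
The second lemma is \emph{strict monotonicity}: for a fixed base $b$, if $0 \le m' < m$ then $o_b(m') < o_b(m)$, which follows by comparing hereditary base-$b$ expansions term by term (from the highest power down) and observing that this lexicographic comparison agrees with the ordinal comparison of the corresponding Cantor normal forms, precisely because every coefficient is $< b$ and so no carrying can reverse an inequality. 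In particular, subtracting $1$ strictly decreases the ordinal whenever the argument is positive.

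Combining the two lemmas, as long as $n_r > 0$ we have $S_{p_{r+1}}^{p_r}(n_r) \ge 1$, and therefore
\begin{equation*}
o_{p_{r+1}}(n_{r+1}) = o_{p_{r+1}}\!\left(S_{p_{r+1}}^{p_r}(n_r) - 1\right) < o_{p_{r+1}}\!\left(S_{p_{r+1}}^{p_r}(n_r)\right) = o_{p_r}(n_r).
\end{equation*}
Thus $\left(o_{p_r}(n_r)\right)_{r}$ is a strictly decreasing sequence of ordinals for as long as the terms $n_r$ stay positive. By the well-foundedness of the ordinals below $\varepsilon_0$ there is no infinite strictly decreasing chain, so the sequence cannot stay positive forever; hence $n_r = 0$ for some $r$, which is the claim.

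The main obstacle, and the reason the statement is unprovable in Peano arithmetic, is precisely the final appeal to well-foundedness up to $\varepsilon_0$ (equivalently, transfinite induction up to $\varepsilon_0$): each individual step is elementary and finitary, but no finitary termination bound can be extracted inside PA. This is exactly the content that the rest of \S 3.2 must encode when producing the $181$-variable exponential prefixed polynomial equation of Theorem \ref{result2}. The only point needing care in this \emph{generalized} setting is that $p_r$ is merely non-decreasing rather than $p_r = r+2$: I would verify that the invariance lemma uses only $p_{r+1} \ge p_r$ (so digit validity is preserved), and that the degenerate case $p_{r+1} = p_r$ reduces the base bump to the identity, after which $n_{r+1} = n_r - 1$ still strictly decreases the ordinal by monotonicity.
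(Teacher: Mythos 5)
Your proof is correct: it is the classical ordinal-descent argument, and you handle the one point where the general statement differs from the usual one (the base sequence $p_r$ being merely non-decreasing rather than $p_r = r+2$) exactly right, since the invariance lemma $o_{p_{r+1}}(S_{p_{r+1}}^{p_r}(m)) = o_{p_r}(m)$ needs only $p_{r+1} \ge p_r$ for digit validity, and the degenerate case $p_{r+1} = p_r$ still yields strict descent via the subtraction of $1$. However, be aware that the paper never proves this theorem at all: it is quoted as a known result, attributed to Goodstein's 1944 paper (the ``restricted ordinal theorem,'' which is precisely transfinite induction up to $\varepsilon_0$, i.e.\ the argument you reconstructed), and the Kirby--Paris reformulation via the sequence $G_n(m)$ is then adopted purely as a notational device. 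The actual content of \S 3.2 is not a proof of termination but the \emph{encoding} of the statement --- representing hereditary base notation, the base-replacement operation $\RB$, and the highest-power function $\HP$ by Diophantine conditions, G\"odel-coding the auxiliary sequences, and eliminating bounded quantifiers --- so as to produce the 181-variable exponential prefixed polynomial equation of Theorem \ref{result2}. So your argument supplies the mathematics that the paper outsources to its reference [4], and your closing remark correctly identifies why that mathematics cannot be internalized: the appeal to well-foundedness below $\varepsilon_0$ is exactly what makes the resulting equation's solvability statement unprovable in Peano arithmetic, which is the independence phenomenon the paper's encoding is designed to exhibit.
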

\indent We will instead use the following simpler notation (due to Kirby and Paris \cite{kp}): Let $G_n(m)$ be the number produced by replacing every $n$ in the hereditary base-$n$ representation of $m$ by $n+1$ and subtracting 1. So, the Goodstein sequence for $m$ starting at 2 is:
\begin{equation*}
m_0 = m, \mbox{            } m_1 = G_2(m_0), \mbox{            } m_2 = G_3(m_1), \mbox{            } m_3 = G_4(m_2),\mbox{            }\ldots
\end{equation*}
Hence, $m_i$ can be viewed as the \emph{cipher} of the base $i+a$ positional code $\tuple{m_i,i+a,l+1}$ of the sequence $\tuple{c_0,\ldots,c_l}$. Thus, the statement of Goodstein's theorem can be rewritten as (where $m > 1$ and $a > 1$ are taken as parameters):
\begin{equation*}
\exists k \mbox{            } \forall i \le k \mbox{            } (m_{i+1} = G_{i+a}(m_i) \land m_0 = m \land m_k = 0).
\end{equation*}
In other words, the Goodstein sequence for $m$ starting at $a$ eventually terminates. Hence, the bulk of the work will be focused on representing the action of $G_{i+a}(m_i)$ as a prefixed polynomial expression. \newline
\indent For any $m_i$, the base $i+a$ representation of $m_i$ is
\begin{equation*}
m_i = c_l(i+a)^{l} + c_{l-1}(i+a)^{l-1} + \ldots + c_1(i+a) + c_0.
\end{equation*}
Then $G_{i+a}(m_i)$ can be defined in terms of a recursive function $f$ (which puts $m_i$ in its hereditary base $i+a$ representation and replaces every occurence of $i+a$ with $i+1+a$) as such:
\begin{equation*}
G_{i+a}(m_i) = f^{m_i,i+a}(i+1+a)-1,
\end{equation*}
where
\begin{equation} \label{f}
f^{m_i,i+a}(i+1+a) = \sum_{j = 0}^{l} c_j(i+1+a)^{f^{j,i+a}(i+1+a)},
\end{equation}
where for each $c_j$ ($0 \le j \le l$) has the following Diophantine representation, namely,
\begin{equation*}
c_j = \Elem(m_i,i+a,j).
\end{equation*}
Note that by definition,
\begin{equation*}
f^{0,i+a}(i+1+a) := 0.
\end{equation*}
So the next step is to construct a prefixed polynomial expression for $f$. The first step is to view $f$, the process of putting a number in hereditary base notation, in terms of levels that each terminate at the highest power of the level. Note that the highest power $l$ of the base $i+a$ representation of a natural number $n$ can be given as the following Diophantine representation:
\begin{equation*}
l = \HP(n) \Leftrightarrow \left((i+a)^{l+1} > n \land n \ge (i+a)^l \land n \ne 0\right) \lor \left(n = 0 \land l = 0\right).
\end{equation*}
I suspect though that there must be a number-theoretic function that already does this, so I do not need as many variables in the representation of $l = \HP(n)$ (however, for now, this will have to suffice). With this definition in hand, we can proceed to define $\RB(n)$, which returns the natural number which results from taking the base $i+a$ representation of $n$ and replacing every occurence of $i+a$ with $i+1+a$:
\begin{equation*}
\begin{split}
n' & = \RB(n) \Leftrightarrow \\
&\exists l \mbox{         } \mbox{         } \forall k \le l \mbox{         } \exists c \mbox{            }[l = \HP(n) \land a_0 = \Elem(n,i+a,0) \land a_{k+1} = a_{k} + c \cdot (i+a+1)^{k+1} \\
& \land c = \Elem(n,i+a,k+1) \land a_l = n'].
\end{split}
\end{equation*}
Note that the sequence $a_0,\ldots,a_l$ will be G\"{o}del coded (by the pair $b,d$) so the above definition of $\RB$ formally becomes
\begin{equation*}
\begin{split}
n' & = \RB(n) \Leftrightarrow \\
&\exists l b d \mbox{         } \forall k \le l \mbox{         } \exists c \mbox{         } [l = \HP(n) \land \rem(b,1+d) = \Elem(n,i+a,0) \\
&\land \rem(b,1+(k+2)d) = \rem(b,1+(k+1)d) + c \cdot (i+a+1)^{k+1} \\
& \land c = \Elem(n,i+a,k+1) \land \rem(b,1+(l+1)d) = n'].
\end{split}
\end{equation*}
For clarity of exposition, I will leave sequences be and not replace them with the G\"{o}del code of their elements via the remainder function. So, $f$ can be viewed as the following sequence, $s_0,s_1,\ldots,s_L$ where
\begin{equation} \label{s}
\begin{split}
s_0 &= \operatorname{RB}(m_i,\tuple{0,\ldots,\HP(m_i)})\\
s_1 &= \operatorname{RB}(m_i,\tuple{\operatorname{RB}(0,\tuple{0,\ldots,\HP(0)}),\ldots,\operatorname{RB}(\HP(m_i),\tuple{0,\ldots,\HP(\HP(m_i))})}) \\
\vdots \\
s_{L} &= m_{i+1}+1.
\end{split}
\end{equation}
Namely, $s_0$ is the $\RB$ operation applied to $m_i$ and $\tuple{0,\ldots,\HP(m_i)}$ are the exponents of $s_0$. Next, $s_1$ is the natural number that results from taking $s_0$ and applying the $\RB$ operation to its exponents. $s_2$ is the natural number that results from taking $s_0$ and applying the $\RB$ operation to its exponents. And so forth, until we terminate at $s_L$, which gives us the natural number $m_{i+1}+1$ (since when we subtract 1 we get $m_{i+1}$). First, we can determine $L$, more exactly as:
\begin{equation*}
\begin{split}
L &= \operatorname{level}(m_i) \Leftrightarrow \\
& \forall k \le L [a_0 = m_i \land a_{k+1} = \HP(a_k) \land a_L < i+a \land a_{L-1} \ge i+a].
\end{split}
\end{equation*}
It is also important to be able to access each of the exponents of any $s_n$ in the sequence defined in \eqref{s} because then we can apply the $\RB$ operation on them. Thus, the property ``$p$ is the $k$-th exponent of $s_n$'' is Diophantine:
\begin{equation*}
\begin{split}
p &= \operatorname{Exp}_k(s_n) \Leftrightarrow \\
& \exists cy \mbox{       } [c(i+1+a)^{p} + y = s_n \land c = \Elem(m_i,i+a,k)].
\end{split}
\end{equation*}
With these two representations, the function $f^{m_i,i+a}(i+1+a)$ (as defined in \eqref{f}) can be represented as the following prefixed polynomial (note that we use the following three sequences and omit the explicit use of the remainder function to represent their elements for clarity of presentation: $z_0,\ldots,z_L$, $s_0,\ldots,s_L$, $\tau_0,\ldots,\tau_l$):
\begin{equation*}
\begin{split}
& \exists L \mbox{        }\forall n \le L \mbox{         } \exists l \mbox{         } \forall k \le l \mbox{         } [z_0 = m_i \land z_{n+1} = \HP(z_n) \land z_L < i+a \land z_{L-1} \ge i+a \\
&\land s_0 = \RB(m_i) \land l = z_1 \land \tau_0 = c\cdot (i+1+a)^{d} \land c = \Elem(m_i,i+a,0) \\
&\land d = \RB(d') \land d' = \operatorname{Exp}_0(s_n) \land \tau_{k+1} = \tau_k + f \cdot (i+1+a)^{d'''} \\
& \land f = \Elem(m_i,i+a,k+1) \land d''' = \RB(d'') \land d'' = \operatorname{Exp}_{k+1}(s_n) \land \tau_l = s_{n+1} \land s_L = m_{i+1}+1].
\end{split}
\end{equation*}
\newline
\newline
Thus, the statement of Goodstein's theorem is:
\begin{equation*}
\begin{split}
& \exists r\mbox{        } \forall i \le r \mbox{        } \exists L \mbox{        }\forall n \le L \mbox{         } \exists l \mbox{         } \forall k \le l \mbox{         } [z_0 = m_i \land z_{n+1} = \HP(z_n) \land z_L < i+a \land z_{L-1} \ge i+a \\
&\land s_0 = \RB(m_i) \land \tau_0 = c\cdot (i+1+a)^{d} \land c = \Elem(m_i,i+a,0) \\
&\land d = \RB(d') \land d' = \operatorname{Exp}_0(s_n) \land \tau_{k+1} = \tau_k + f \cdot (i+1+a)^{d'''} \\
& \land f = \Elem(m_i,i+a,k+1) \land d''' = \RB(d'') \land d'' = \operatorname{Exp}_{k+1}(s_n) \land \tau_l = s_{n+1} \\
& \land s_L = m_{i+1}+1 \land m_0 = m \land m_r = 0].
\end{split}
\end{equation*}
\newline
Expanding this, we get the following 181 variable exponential prefixed polynomial representation:
\newpage
\begin{equation*}
\begin{split}
&\exists rb_1d_1\mbox{        } \forall i \le r \mbox{        } \exists v_1v_{14}v_{15}v_{16}h_1h_2h_3h_4w_3w_4p_1p_2L \mbox{        }\forall n \le L \mbox{         } \exists v_2v_3v_4v_5v_6v_8v_{12}v_{13}w_1w_2w_5w_6w_7w_8w_9w_{10}w_{11}w_{12}w_{13}w_{14}w_{15}w_{16}\\
&w_{17}j_1j_2j_3j_4j_5j_6e_1e_2u_1u_2u_3 \mbox{         }\forall u_5 \le u_1 \mbox{          } \exists u_4u_6\ldots u_{30}l \mbox{         } \forall k \le l \mbox{         } \exists v_7cdd'v_9v_{10}fd''d'''h_5h_6h_7h_8h_9h_{10}h_{11}h_{12}h_{13}h_{14}\mbox{         }\forall h_{15} \le h_{12}\mbox{      }\\
&\exists h_{16}\ldots h_{41}j_7\ldots j_{24}\mbox{        }\forall j_{25} \le j_{22} \mbox{       }\exists j_{26}\ldots j_{51}q_1 \ldots q_9 \mbox{       }\\
&[((v_1 + w_3 - d_1(i+1))^2 + ((d_1(i+1)+1)w_4 - b_1 + v_1)^2)^2 + ((v_2 + w_5 - d_3(n+2))^2 + ((d_3(n+2)+1)w_6 - b_3 + v_2)^2)^2 \\
&+ ((v_3 + w_{10} - d_3(n+1))^2 + ((d_3(n+1)+1)w_{11} - b_3 + v_3)^2)^2 + ((v_1 + w_1 - d_3)^2 + ((d_3+1)w_2 - b_3 + v_1)^2)^2 \\
&+ ((((v_3 + w_7 + 1 - (i+a)^{v_2+1})^2 + ((i+a)^{v_2})^2 +w_8 - v_3)^2  + (v_3^2-w_9-1)^2) \cdot (v_3^2 + v_2^2))^2 + ((v_4 + w_{12} - d_3(L+1))^2 \\
&+ ((d_3(L+1)+1)w_{13} - b_3 + v_4)^2)^2 + (v_4 +w_{14} + 1 - i - a)^2 + ((v_5 + w_{15} - d_3L)^2 + ((d_3L+1)w_{16} - b_3 + v_5)^2)^2 \\
&+ (i+a +w_{17} + 1 - v_5)^2 + ((v_6 + j_1 - d_2)^2 + ((d_2+1)j_2 - b_2 + v_6)^2)^2 + ((u_6 + u_{10} - u_3)^2 + ((u_3+1)u_{11} - u_2 + u_6)^2)^2 \\
&+ ((u_{14} + 1)^2 + (u_{12} + u_6(i+a)^{u_{14}} + u_{13} - v_1)^2 + (u_6 + u_{15} + 1 - (i+a))^2 + (u_{13} + u_{16} + 1 - (i+a)^{u_{14}})^2)^2 \\
&+ ((u_7 + u_{17} - (u_5+2)u_3)^2 + (((u_5+2)u_{3}+1)u_{18} - u_2 + u_7)^2)^2 + ((u_8 + u_{19} - (u_5+1)u_3)^2 + (((u_5+1)u_{3}+1)u_{20} \\
&- u_2 + u_8)^2)^2 + (u_8 + u_4(i+a+1)^{u_5+1} - u_7)^2 + ((u_5 - u_{23})^2 + (u_{21}(i+a)^{u_5+1} + u_4(i+a)^{u_{23}} + u_{22} - u_4)^2 \\
&+ (u_4 + u_{24} + 1 - (i+a))^2 + (u_{22} + u_{25} + 1 - (i+a)^{u_{23}})^2)^2 + ((u_9 + u_{26} - (u_1+1)u_3)^2 + (((u_1+1)u_{3}+1)u_{27} \\
&- u_2 + u_9)^2)^2 + (u_9 - u_6)^2 + ((((v_1 + u_{28} + 1 - (i+a)^{u_1+1})^2 + ((i+a)^{u_1})^2 +u_{29} - v_1)^2 + (v_1^2-u_{30}-1)^2) \cdot (v_1^2 + u_1^2))^2)^2 \\
&+ ((v_7 + h_5 - d_4)^2 + ((d_4+1)h_6 - u_2 + v_7)^2)^2 + (v_7 - c\cdot (i+1+a)^{d})^2 + ((h_9 + 1)^2 + (h_7 + c(i+a)^{h_9} + h_8 - v_1)^2 \\
&+ (c + h_{10} + 1 - (i+a))^2 + (h_8 + h_{11} + 1 - (i+a)^{h_9})^2)^2 + ((h_{17} + h_{21} - h_{14})^2 + ((h_{14}+1)h_{22} - h_{13} + h_{17})^2)^2 \\
&+ ((h_{25} + 1)^2 + (h_{23} + h_{17}(i+a)^{h_{25}} + h_{24} - d')^2 + (h_{17} + h_{26} + 1 - (i+a))^2 + (h_{24} + h_{27} + 1 - (i+a)^{h_{25}})^2)^2 \\
&+ ((h_{18} + h_{28} - h_{14}(h_{15}+2))^2 + ((h_{14}(h_{15}+2)+1)h_{29} - h_{13} + h_{18})^2)^2 + ((h_{19} + h_{30} - h_{14}(h_{15}+1))^2 \\
&+ ((h_{14}(h_{15}+1)+1)h_{31} - h_{13} + h_{19})^2)^2 + (h_{19} + h_{16}(i+a+1)^{h_{15}+1} - h_{18})^2 + ((h_{15} - h_{34})^2 + (h_{32}(i+a)^{h_{15}+1} \\
&+ h_{16}(i+a)^{h_{34}} + h_{33} - d')^2 + (h_{16} + h_{35} + 1 - (i+a))^2 + (h_{33} + h_{36} + 1 - (i+a)^{h_{34}})^2)^2 + ((h_{20} + h_{37} - h_{14}(h_{12}+1))^2 \\
&+ ((h_{14}(h_{12}+1)+1)h_{38} - h_{13} + h_{20})^2)^2 + (h_{20} - d)^2 + ((((d' + h_{39} + 1 - (i+a)^{h_{12}+1})^2 + ((i+a)^{h_{12}})^2 +h_{40} - d')^2  \\
&+ ((d')^2-h_{41}-1)^2) \cdot ((d')^2 + h_{12}^2))^2)^2 + ((v_8 + e_1 - (n+1)d_2)^2 + ((d_2(n+1)+1)e_2 - b_2 + v_8)^2)^2 + ((j_7(i+1+a)^{d'} \\
&+ j_8 - v_8)^2 + ((j_{11} + 1)^2 + (j_9 + j_7(i+a)^{j_{11}} + j_{10} - v_1)^2 + (j_7 + j_{12} + 1 - (i+a))^2 + (j_{10} + j_{13} + 1 - (i+a)^{j_{11}})^2)^2)^2 \\
&+ ((v_9 + j_{14} - (k+1)d_4)^2 + ((d_4(k+1)+1)j_{15} - b_4 + v_9)^2)^2 + ((v_{10} + j_{16} - (k+2)d_4)^2 + ((d_4(k+2)+1)j_{17} - b_4 \\
&+ v_{10})^2)^2 + (v_9 + f \cdot (i+1+a)^{d'''} - v_{10})^2 + ((j_{19} - k)^2 + (j_{17}(i+a)^{k+1} + f(i+a)^{j_{19}} + j_{18} - v_1)^2 + (f + j_{20} + 1 - (i+a))^2 \\
&+ (j_{18} + j_{21} + 1 - (i+a)^{j_{19}})^2)^2 + ((j_{27} + j_{31} - j_{24})^2 + ((j_{24}+1)j_{32} - j_{23} + j_{27})^2)^2 + ((j_{35} + 1)^2 + (j_{33} + j_{27}(i+a)^{j_{35}} + j_{34} \\
&- d'')^2 + (j_{27} + j_{36} + 1 - (i+a))^2 + (j_{34} + j_{37} + 1 - (i+a)^{j_{35}})^2)^2 + ((j_{28} + j_{38} - j_{24}(j_{25}+2))^2 + ((j_{24}(j_{25}+2)+1)j_{39} \\
&- j_{23} + j_{28})^2)^2 + ((j_{29} + j_{40} - j_{24}(j_{25}+1))^2 + ((j_{24}(j_{25}+1)+1)j_{41} - j_{23} + j_{29})^2)^2 + (j_{29} + j_{26}(i+a+1)^{j_{25}+1} - j_{28})^2 \\
&+ ((j_{25} - j_{44})^2 + (j_{42}(i+a)^{j_{25}+1} + j_{26}(i+a)^{j_{44}} + j_{43} - d'')^2 + (j_{26} + j_{45} + 1 - (i+a))^2 + (j_{43} + j_{46} + 1 - (i+a)^{j_{44}})^2)^2 \\
&+ ((j_{30} + j_{47} - j_{24}(j_{22}+1))^2 + ((j_{24}(j_{22}+1)+1)j_{48} - j_{23} + j_{30})^2)^2 + (j_{30} - d''')^2 + ((((d'' + j_{49} + 1 - (i+a)^{j_{22}+1})^2 \\
&+ ((i+a)^{j_{22}})^2 + j_{50} - d'')^2  + ((d'')^2-j_{51}-1)^2) \cdot ((d'')^2 + j_{22}^2))^2)^2 + ((v_{12} + q_8 - (l+1)d_4)^2 + ((d_4(l+1)+1)q_9 \\
&- b_4 + v_{12})^2)^2 + ((q_1(i+1+a)^{d''} + q_2 - v_8)^2 + ((q_5 - k)^2 + (q_3(i+a)^{k+1} + q_1(i+a)^{q_5} + q_4 - v_1)^2 + (q_1 + q_6 + 1 \\
&- (i+a))^2 + (q_4 + q_7 + 1 - (i+a)^{q_5})^2)^2)^2 + ((v_{12} + j_3 - d_2(n+2))^2 + ((d_2(n+2)+1)j_4 - b_2 + v_{12})^2)^2 + ((v_{13} + j_5 \\
&- d_2(L+1))^2 + ((d_2(L+1)+1)j_6 - b_2 + v_{13})^2)^2 + (v_{13} - v_{14} - 1)^2 + ((v_{14} + p_1 - d_1(i+2))^2 + ((d_1(i+2)+1)p_2 \\
&- b_1 + v_{14})^2)^2 + (v_{14} +1 - v_{13})^2 + ((v_{15} + h_1 - d_1)^2 + ((d_1+1)h_2 - b_1 + v_{15})^2)^2 + (v_{15} - m)^2 + ((v_{16} + h_3 - d_1(r+1))^2 \\
&+ ((d_1(r+1)+1)h_4 - b_1 + v_{16})^2 + v_{16}^2)^2 = 0].
\end{split}
\end{equation*}

\section*{Acknowledgements}
The author expresses his gratitude to Professor Grigori Mints for suggesting the topic to him as well as for his guidance during the process of this research.

\newpage
\bibliographystyle{amsplain}

\end{document}